\documentclass[notitlepage,12pt,reqno]{amsart}  

\usepackage {ulem}

  \usepackage[utf8]{inputenc}

\usepackage{amsfonts, amsmath, amssymb, amsthm, bbm, color, enumerate, graphicx, mathtools, tikz, hyperref, relsize,enumitem}
\usepackage[numbers,sort]{natbib}
\usepackage{euscript}
\usepackage{comment}
\usepackage[margin=1in]{geometry}
\usepackage{cleveref}

\theoremstyle{plain}

\usepackage{color}

\definecolor{vry}{RGB}{253, 231, 37}

\definecolor{vrg}{RGB}{94,201,98}
 
\definecolor{vrdg}{RGB}{33, 145, 140}

\definecolor{vrb}{RGB}{59,82,139}

\definecolor{vrp}{RGB}{68,1,84}

\definecolor{vro}{RGB}{249,142,9}

\definecolor{vrr}{RGB}{188,55,84}

\definecolor{vrnb}{RGB}{13,8,135}

\allowdisplaybreaks

\theoremstyle{plain}

\newtheorem{theorem}{Theorem}[section]
\newtheorem{corollary}[theorem]{Corollary}
\newtheorem{lemma}[theorem]{Lemma}
\newtheorem{proposition}[theorem]{Proposition}
\theoremstyle{remark}
\newtheorem{remark}[theorem]{Remark}

\newtheorem{definition}{Definition}[section]

\newtheorem{assumption}{Assumption}[section]

\let\plainqed\qedsymbol
\newcommand{\claimqed}{$\lrcorner$}

\hypersetup{
	colorlinks=true,
	linkcolor=vrb,          
	citecolor=vrr,       
	filecolor=vrr,   
	urlcolor=vrr, 
	pdftitle={},
	pdfauthor={},
	pdfsubject={},
	pdfkeywords={},
	linktocpage=true
}

\usepackage{fancyhdr}

\usepackage{natbib}

\usepackage{amssymb}
\usepackage{amsmath}
\usepackage{amsthm}

\newcommand{\D}{{\mathrm d}}

\newcommand{\NN}{{\mathbb N}}

\newcommand{\RR}{{\mathbb R}}
\newcommand{\R}{{\mathbb R}}

\newcommand{\Z}{{\mathbb Z}}

\newcommand{\PP}{{\mathbb P}}

\newcommand{\calA}{{\mathcal A}}

\newcommand{\calD}{{\mathcal D}}

\newcommand{\calF}{{\mathcal F}}

\newcommand{\calG}{{\mathcal G}}
\newcommand{\calH}{{\mathcal H}}

\newcommand{\calK}{{\mathcal K}}

\newcommand{\calP}{{\mathcal P}}

\newcommand{\calR}{{\mathcal R}}

\newcommand{\calS}{{\mathcal S}}

\newcommand{\calU}{{\mathcal U}}

\newcommand{\calY}{{\mathcal Y}}

\newcommand{\calZ}{{\mathcal Z}}

\newcommand{\frC}{\mathfrak{C}}
\newcommand{\frD}{\mathfrak{D}}
\newcommand{\frB}{\mathfrak{B}}

\newcommand{\be}{\begin{equation}}
	\newcommand{\ee}{\end{equation}}

\numberwithin{equation}{section}

\title[Small noise asymptotics of diffusions with heavy tails]{Small noise asymptotics for a class of jump-diffusions with Heavy tails for large times}
\author[Sumith Reddy]{Sumith Reddy Anugu}

\address{Sumith Reddy Anugu\\
Institut f\"ur Mathematik\\
Technische Universit\"at Ilmenau\\
 Germany 98693 \\
}
\email{sumith-reddy.anugu@tu-ilmenau.de}

\author[Siva R. Athreya]{Siva R. Athreya}
\address{ Siva R. Athreya\\
International Centre for Theoretical Sciences\\
Bengaluru- 560089, India\\
}
\email{athreya@icts.res.in} 

\author[Vivek S. Borkar]{ Vivek S. Borkar}
\address{ Vivek S. Borkar\\
	Dept.\  of Electrical Engg.\\
	Indian Institute of Technology Bombay, Powai\\%
	Mumbai 400076, India. }
\email{borkar.vs@gmail.com}
\keywords{large time asymptotics, small noise asymptotics, weak large deviation principle,  diffusions, $\alpha$-stable process, Brownian motion, mixed continuous and impulse control problem}
\subjclass[2020]{Primary: 60F10, Secondary: 60H10, 49N25, 60G52	}

\begin{document}
	\begin{abstract}In this work, we investigate positive recurrent L\'evy diffusions driven by appropriately scaled  Brownian motion and $\alpha$--stable process (with $1<\alpha<2$) in the small noise regime. Supposing that in the vanishing noise limit, our L\'evy diffusion approaches a deterministic system  with a unique asymptotically stable fixed point, we show that the  limiting behavior of the one-dimensional marginal distribution at large times   is dictated by the  optimal value of a deterministic control problem, just as in the classical case of diffusions driven by small variance Brownian motion. In  our case, the control is allowed to have two parts:  continuous control and impulse control.
	\end{abstract}

        	\maketitle

	\section{Introduction}  
In the seminal work of M. I. Freidlin and A. D Wentzell in \cite{freidlin2012}, the authors investigated  small noise asymptotics of invariant measures of continuous diffusions. They consider an $\RR^p$--valued diffusion given by 
	$$ Z^n(t)= Z^n(0)+ \int_0^t b\big(Z^n(s)\big)\D s + a_n  W(t),$$
where, $a_n\downarrow 0$ as $n\to\infty$ and $b: \RR^p\rightarrow \RR^p$ be such that  the associated ordinary differential equation (ODE)  $\dot{Z}(t)= b\big(Z(t)\big)$  has a unique asymptotically stable equilibrium (say at $z_0$). Then  the  unique invariant measure $\tilde \pi^n$ of $Z^n$ satisfies a large deviation principle (LDP), namely
	 \begin{align}\label{eq-ldp-cont}
	-\inf_{z\in A^\circ}I(z)\leq \liminf_{n\to\infty}a_n^2 \log \tilde \pi^n(A)\leq \limsup_{n\to\infty}a_n^2 \log\tilde \pi^n(A)\leq  -\inf_{z\in \overline A}I(z),
	 \end{align}
 for a Borel set  $A\subset \RR^p$, with  $A^\circ$ being the interior of $A$, $\overline A$  the closure of $A$ and
	 \begin{align}\label{eq-rf-cont}
	 I(z)\doteq \frac{1}{2}\inf \int_0^\infty \|u(t)\|^2 \D t
	 \end{align}
	with the infimum being over all measurable and square-integrable $u(\cdot)$ such that the solution  $z(\cdot)$ to the ODE
$$\dot z(t)= -b\big(z(t)\big)-u(t)\,, z(0)=z$$
        satisfies  $z(t)\to z_0$ as $t\to\infty$. The small noise asymptotics of invariant measures is dictated by a rate function which can be expressed as the value function of a  certain optimal control problem involving only continuous controls as in~\eqref{eq-rf-cont}. Therefore it is natural to consider jump diffusions driven by  an independent pair  $(W,L^\alpha)$  of an   $\RR^p$--valued Brownian motion and an $\RR^p$--valued  $\alpha$--stable process with ($1<\alpha<2$).  The  associated small noise asymptotics for this would be  dictated by an optimal control problem that involves both continuous and impulse controls. 

However, one needs to account for the fact that the sample-path LDP of $\{b_nL^\alpha:n\in \NN\}$ does not hold (see \cite[Section 4.4]{rhee2019sample}) and only  a sample-path weak LDP (WLDP) of $\{b_nL^\alpha: n\in \NN\}$ holds (see \cite[Section 3.2]{rhee2019sample}). Namely,  for large $n$,
	\begin{align*} \PP(b_n L^\alpha\in U )\gtrsim \exp\Big(  \big(\log b_n\big)\inf_{x\in U} I_L(x)\Big), \text { for an open set $U$,} \\
	\PP(b_n L^\alpha\in K )\lesssim \exp\Big(  \big(\log b_n\big)\inf_{x\in K} I_L(x)\Big), \text { for a compact set $K$}, \end{align*}
	where $I_L(x) $ is a discrete valued function equal to the total number of jumps  in all directions for a piecewise constant $x$ and $I_L(x)=\infty$, otherwise.

In this paper, we consider the following family of $p$--dimensional L\'evy diffusions  parametrized by $(n,\gamma)$ with $n\in \NN$ and $\gamma>0$,  given as the solution to the following equation:
\begin{align}\label{alphastable}
X^{n,\gamma}(t)= X^{n,\gamma}(0)+\int_0^tb\big(X^{n,\gamma}(s)\big)\D s + \frac{1}{\sqrt{\log n}}W(t)+ \frac{1}{n^\gamma} L^\alpha(t)\,.
\end{align}
where $W$ and $L^\alpha$ are $p$--dimensional Brownian motion and
$p$--dimensional $\alpha$--stable process with $1<\alpha<2$,
respectively. We prove in Theorem~\ref{thm-main} that $X^{n,\gamma}(T)$
satisfies a WLDP as $n\rightarrow \infty$ followed by $T\rightarrow \infty$. The choice of $\frac{1}{\sqrt{\log n}}$ and
  $\frac{1}{n^\gamma}$ is dictated by the above WLDP for $\alpha$-stable
  processes and the sample path LDP of $\{\frac{1}{\sqrt{\log n}} W: n\in \NN\} $\cite{schilder1966some}.  Note that larger the $\gamma$  smaller is the effect of $L^\alpha$ on the dynamics of $X^{n,\gamma}$ and vice versa.  To prove this, we use techniques from control theory mainly, dynamic programming.\\

The remainder of the introduction is organised as follows. In Section
\ref{sec:bac} we provide a brief review of the literature, followed by statement of main results in Section \ref{sec:main}. We conclude this section with a discussion of the significance of our main results, the novelty in our proof technique, and the organisation of the paper in Section \ref{sec:disc}.

 \subsection{Background and Literature}       \label{sec:bac}

 Our work initially began with the objective of understanding the
 behaviour of invariant measures for \eqref{alphastable} in the small
 noise limit viz. as $n$ goes to infinity. There is a wide literature in this area for diffusions driven by a brownian motion. We provide a brief review.

 In \cite{borkar2013asymptotics}, the authors consider how the
long-term distribution of the invariant measure of a large system of
interacting Markov chains behaves as the number of components grows.
Under mean-field coupling they show that when the limiting
deterministic McKean-Vlasov system has a unique stable equilibrium, the
empirical distribution concentrates there. For a diffusion process
with small random perturbations around a system with a unique globally
attracting stable equilibrium, it was shown in
\cite{day1988localization} that the stationary density becomes
essentially localized. Its values inside a bounded region around the
equilibrium depend less and less on the behavior of the system
outside that region as the noise goes to zero, leading to asymptotic
independence of the equilibrium density from distant coefficients and
strengthening earlier asymptotic representations of the invariant
density.

In \cite{hu2017hypo} the authors analyze a class of hypoelliptic
multiscale Langevin diffusions, focusing on both their large deviation
behavior and the small mass asymptotics of their invariant
measures. They prove that the LDP for the multiscale
system are consistent with those of its overdamped counterpart, and
they provide asymptotic expansions for the invariant density and
related hypoelliptic Poisson equations as the mass parameter tends to
zero, offering insight into how multiscale structure and
hypoellipticity influence rare events and long-term statistics.

 The asymptotic behavior of stationary densities for a family of
 Markov processes depending on a small noise parameter was considered
 in \cite{mikami1988asymptotic}. Under appropriate smoothness and
 stability conditions, they derive formal asymptotic expansions of the
 invariant density in powers of the small parameter, showing that the
 stationary distribution concentrates near the stable equilibrium of
 the deterministic part of the dynamics and that corrections to this
 concentration can be systematically computed.

In the context of stochastic partial differential eequations, in \cite{cerrai2022large} the authors consider the two-dimensional incompressible Navier–Stokes equations on a torus driven by Gaussian noise that is white in time and has spatial correlation whose scale shrinks to zero together with the noise intensity. They prove an LDP both for the solutions and for the family of invariant measures as the noise strength and correlation length tend simultaneously to zero under an appropriate scaling, showing that the stationary distributions concentrate according to a rate function determined by the quasi-potential of the associated controlled deterministic Navier–Stokes dynamics.

As mentioned earlier,  we were not able to derive an LDP for the
invariant measure for \eqref{alphastable}. This is due to the fact that the sample path LDP of the  $\alpha$--stable process with ($1<\alpha<2$) does not hold under conventional space-time scaling, but a weak sample path LDP holds, \cite{rhee2019sample}. However, we  do consider  $n\rightarrow \infty$ followed by $T\rightarrow \infty$ limit. We state our main result precisely next.

\subsection {Main result}\label{sec:main} 

Let $(\Omega, \calF, \{\calF_t\}_{t \geq 0}, \PP)$ denote the underlying probability space on which \eqref{alphastable} is defined. Let $\|\cdot\|$ denote the Euclidean norm on $\RR^p$ with $\langle \cdot,\cdot\rangle$ denoting the corresponding inner product. Let  $\RR_+$ be the set of non-negative real numbers.  Let $\frD_T^p$ ($\frC_T^p$, resp.) be the set of $\RR^p$--valued  right continuous functions with left limits (continuous functions, resp.)  on $[0,T]$ equipped with the Skorohod topology (uniform topology, resp.). When  $T=\infty$, we simply write $\frD^p$ ($\frC^p$, resp.)  For a piecewise constant c\'adl\'ag function $\xi\in \frD^1$, $\D_+(\xi)$ (resp., $\D_-(\xi)$) denotes the number of upward (resp., downward) jumps. Also, for any path  $f\in \frD^p$, $f([a,b])$, for $0\leq a<b<\infty$, refers to the path from time $a$ to time $b$ with the end points $a,b$ included.

\begin{definition}\label{def-ldp} For a Polish space $(S,\calS)$, A family  $\{Z^n: n\in \NN\}\subset \calP(S)$ of $S$--valued random variables is said to satisfy a large deviation principle (LDP) with rate function $I$ and rate $c_n$,  if  $I:S\rightarrow [0,\infty]$ is a lower semi-continuous function  such that $I_l\doteq \{x\in S:I(x)\leq l\}$ is compact in $S$ for every $l>0$, and
	\begin{align}\label{def-ldp-c} \limsup_{n\to\infty} \frac{1}{c_n}\log \PP(Z^n\in C)\leq -\inf_{x\in C} I(x),\text{ for every closed set $C\subset S$},\end{align}
	\begin{align}\label{def-ldp-u} \liminf_{n\to\infty} \frac{1}{c_n}\log \PP(Z^n \in U) \geq -\inf_{x\in U}I(x), \text{ for every open set $U\subset S$.}  \end{align}
If $\{Z^n:n\in \NN\}$ is such that $I_l$ is not necessarily compact for some $l>0$, ~\eqref{def-ldp-c} is satisfied only when $C$ is compact, and~\eqref{def-ldp-u} holds for all open sets $U$, then we say that $\{Z^n:n\in \NN\}$ satisfies a weak large deviation principle (WLDP) with rate function $I$ and rate $c_n$.
\end{definition}

To keep the expressions concise, we define $$W^n(t)\doteq \frac{1}{\sqrt {\log n}} W(t)\text{ and }L^{n,\gamma}(t)\doteq \frac{1}{n^\gamma}L^\alpha(t), \text{ for $t\geq 0$}\,.$$   We now state the main assumption of the paper.
	 
\begin{assumption}\label{main-assump} The following conditions hold.
\begin{enumerate}
\item [(i)]
  The map $b:\RR^p\rightarrow \RR^p$ is a globally Lipschitz function and, $$X^{n,\gamma}(0),\, W,\,\text{and } L^\alpha \text{ are independent.}$$  Assume that  $\{X^{n,\gamma}(0):n\in \NN\}\subset \calP(\RR^p)$ is such that $X^{n,\gamma}(0)$ converges to $0$ in law and satisfies LDP with a strictly convex rate function $\widetilde{V}:\RR^p\rightarrow [0,\infty]$ and rate $\log(n)$, with minimum at $0$. 
    
\item [(ii)]The equation   $$ \frac{\D}{\D t}{\phi}(t,x)= b\big(\phi(t,x)\big),\, \phi(0,x)=x,$$
 has a unique globally asymptotically stable equilibrium at $x=0$ ($\doteq$ the zero vector of appropriate dimension). Here the choice of  $x=0$ is simply for convenience.
 	
	\end{enumerate}
\end{assumption}

Before we state the main result of the paper, we introduce a few important  definitions. 
\begin{definition} \label{def:imp} For $x,z\in \RR^p$ and $T>0$, we define a set of `controls' $\calS_x(z,T)$ as the set of all functions $(u,v): [0,T]\rightarrow \RR^p\times \RR^p$ such that the following hold:\\

\begin{enumerate}
	\item[(i)] $u:[0,T] \rightarrow \RR^p$ is Borel measurable and is such that $$\frac{1}{2}\int_0^T\|u(t)\|^2\D t<\infty\,.$$
	
	\item [(ii)] $v\in \frD^p_T$ is a piecewise constant map from $[0,T]$ to $\RR^p$ that satisfies the following property: if for $0\leq t\leq T$, $v(t)\neq v(t-)$, then there exists a unique $1\leq i\leq p$ such that $v_j(t)\neq v_j(t-)$, if and only if $i=j$.
	\end{enumerate}
	\end{definition}
	{ Let  $Y$ be an $\RR^p$--valued process that satisfies $Y(0)= z$, $Y(T)=x$ and  
	\begin{align}\label{ctdyn} \frac{\D }{\D t}{Y}(t)&= b(Y(t)) + u(t), \text{ for $t_{i-1}\leq t\leq t_i$,}\\
	 Y(t_i+)&= v(t_i)-v(t_i-), \label{ctdyn-i}
	\end{align}
	where, the set $\{t_i\}_{i=1}^k\subset [0,T]$ is such that  $v(t)= v(t-)$, for $t_{i-1}\leq t< t_i$ and $v(t_i)\neq v(t_i-)$. }

\medskip

When $Y(0)=z$ and $Y(T)\to x$ as $T\to \infty$ in the above definition, we denote the corresponding set of controls  by  $\calS_x(z)$  and   denote $\calS_x(0)$ by  $\calS_x$. We likewise define a `time-reversed' version $\calR_x(z,T)$ as above, except that~\eqref{ctdyn} and~\eqref{ctdyn-i} are replaced by:
\begin{align} \frac{\D }{\D t}{ Y}(t)&= -b( Y(t)) + u(t), \text{ for $t_{i-1}\leq t\leq t_i$} \label{ctdynR} \\
	 Y(t_i+)&= v(t_i)-v(t_i+),  \label{ctdynR1}
	\end{align}
such that $Y(0)=x$ and $Y(T)=z$. $\calR_x(z)$ is defined as the set of controls such that $Y(T)\to z$ as $T\to\infty$ and $ \calR_x\doteq \calR_x(0) $.
Let $I_L:\frD^p\rightarrow [0,\infty]$ be a discrete valued function defined as
\begin{align}\label{def-IL} I_L(v)\doteq \sum_{i=1}^p \widetilde I_L(v_i) \text{ with } v=(v_1,v_2,\ldots,v_p), \end{align}
where, $\widetilde I_L:\frD^1\rightarrow [0,\infty]$ is defined as 
\begin{align}\label{def-rf-L}
\widetilde I_L(\phi)\doteq \begin{cases}\alpha\big( \D_+ (\phi) +\D_-(\phi)\big), &\text{ whenever $\phi\in \frD^1$ is  a piecewise constant function.}\\
 \infty, &\text{ otherwise.}
\end{cases}
\end{align}

We first establish that the one-dimensional marginals of the process $X^{n,\gamma}$  satisfying~\eqref{alphastable} satisfies a WLDP.

\begin{proposition}\label{prop-finite}
Let $(u,v)$ and $\calS_x(z,T)$ as in Definition~\ref{def:imp} and $I_L$ be as defined in~\ref{def-IL}. Suppose Assumption~\ref{main-assump} holds. Then for every $T>0$,  $\{X^{n,\gamma}(T):n\in \NN\}$ satisfies WLDP with rate $\log(n)$ and rate function $V_{\gamma,T}:\RR^p\rightarrow \RR_+$   given by  
\begin{align}\label{eq:hjb-mod-rep}
V_{\gamma,T}(x)= \inf_{ \stackrel{(u,v) \in \calS_x(z,T)}{ z\in \RR^p}} \Big( \widetilde V(z)+\frac{1}{2}\int_0^T \|u(s)\|^2 \D s + \gamma I_L(v)\Big)\, .
\end{align} 

\end{proposition}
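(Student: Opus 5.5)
Our approach is a contraction-principle argument at the level of sample paths, combined with the (weak) LDPs for the three independent inputs $X^{n,\gamma}(0)$, $W^n$ and $L^{n,\gamma}$.

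\textbf{Step 1: the solution map.} Since $b$ is globally Lipschitz, for each triple $(z,w,\ell)\in\RR^p\times\frC^p_T\times\frD^p_T$ the integral equation
\begin{equation*}
y(t)=z+\int_0^t b(y(s))\,\D s+w(t)+\ell(t)
\end{equation*}
has a unique solution $y=\Phi(z,w,\ell)\in\frD^p_T$. Gronwall's inequality shows that $\Phi$ is continuous in sup norm. If $\ell$ is piecewise constant with jumps at $t_1<\dots<t_k$, then on $(t_{i-1},t_i)$ the path $y$ solves $\dot y=b(y)+u$ with $w=\int_0^\cdot u$, and at each $t_i$ it jumps by $\ell(t_i)-\ell(t_i-)$. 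This is exactly the controlled dynamics~\eqref{ctdyn}--\eqref{ctdyn-i}. Hence $X^{n,\gamma}(T)=\Phi(X^{n,\gamma}(0),W^n,L^{n,\gamma})(T)$.

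\textbf{Step 2: joint WLDP of the inputs at rate $\log n$.}
\begin{itemize}
\item Schilder's theorem gives an LDP for $W^n$ with rate $\log n$ and rate function $\frac12\int_0^T\|\dot w\|^2$.
\item By \cite{rhee2019sample}, applied with $b_n=n^{-\gamma}$ so that $\log b_n=-\gamma\log n$, the process $L^{n,\gamma}$ satisfies a WLDP at rate $\log n$ with rate function $\gamma I_L$. Because the coordinates of $L^\alpha$ jump one at a time almost surely, the optimal paths carry the single-coordinate-jump structure built into Definition~\ref{def:imp}(ii).
\item $X^{n,\gamma}(0)$ satisfies an LDP with rate function $\widetilde V$.
\end{itemize}
By independence, the bounds for product open sets and product compact sets multiply. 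Since the first two families are exponentially tight, this yields a WLDP for the triple with the sum rate function. The infimum of that sum over $\{\Phi(\cdot)(T)=x\}$ is precisely $V_{\gamma,T}(x)$ in~\eqref{eq:hjb-mod-rep}.

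\textbf{Step 3: lower bound.} Fix an open $U\ni x$ and a near-optimal $(z,u,v)$. The set $\{(z',w',\ell'):\Phi(z',w',\ell')(T)\in U\}$ contains a product neighbourhood of $(z,\int_0^\cdot u,v)$. For the $\ell$-coordinate this is handled as follows. Perturbing jump times slightly and adding small sup-norm noise moves $y(T)$ only slightly, provided no jump sits at $T$, which we may arrange by a small time shift. So a Skorokhod neighbourhood of $v$ suffices, and the WLDP lower bounds in Step 2 give the result.

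\textbf{Step 4: upper bound for compact $K$ (main obstacle).} The difficulty is that $L^{n,\gamma}$ only has a weak LDP and is not exponentially tight, so the contraction principle does not apply directly. We handle this by truncation.

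Split $L^\alpha$ into jumps of size at least $\delta n^{\gamma}$ and the remainder. The large-jump part is compound Poisson, and the number of such jumps on $[0,T]$ satisfies
\begin{equation*}
\PP(\text{at least } j \text{ jumps})\lesssim n^{-\alpha\gamma j}.
\end{equation*}
We handle the remainder as follows:
\begin{itemize}
\item For $\alpha>1$ the small-jump martingale part, scaled by $n^{-\gamma}$, is superexponentially small in sup norm at scale $\log n$, modulo $\delta$ corrections. This is the estimate used in \cite{rhee2019sample}.
\item Given $k$ large jumps, with $k\alpha\gamma\le M$ for a level $M$ exceeding $\inf_K V_{\gamma,T}$, the jump sizes and times range over a set on which we use the following two facts:
\begin{itemize}
\item $X^{n,\gamma}(T)\in K$ and $X^{n,\gamma}(0)$ exponentially tight force the relevant jump sizes to be bounded, by Gronwall;
\item jumps of huge size would leave $y(T)$ outside $K$ unless compensated by further jumps, which cost more than $M$.
\end{itemize}
\end{itemize}
This reduces the problem to a compact subset of paths. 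There, the upper bound follows from continuity of $\Phi$ and the product upper bounds. Finally we let $\delta\to0$.

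The delicate point is controlling huge jumps that cancel under $b$, for example a jump far out followed by the flow returning. Here we use that the jumps are counted by $I_L$ regardless of size: all that matters is that any such configuration still lies in the feasible control set, so its cost is bounded below by $\inf_K V_{\gamma,T}$. Compactness of the remaining parameters is then needed only for the Gaussian and initial components, which are exponentially tight.
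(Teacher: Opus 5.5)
Your outline is sound, but it takes a different route from the paper. The paper deduces the proposition from continuity of the solution map (Lemma~\ref{lem-cont-map}) and an abstract weak contraction principle (Lemma~\ref{th:weakldp-contprinc}) for an arbitrary continuous $G$:
\begin{itemize}
\item the lower bound is pulled back through $G^{-1}U$, as in your Step~3;
\item for the upper bound it intersects $G^{-1}K$ with the compact set $\calA_M$ (level sets of $I_\beta$ and $I_W$, times the set $\calG_M$ of bounded step functions with a prescribed number of jumps of size at least $M^{-1}$), and asserts that $\PP(Y^n\in\calA_M^c)\le n^{-M+o(1)}$.
\end{itemize}
You instead use the additive structure of $\Phi$ and truncate the stable jumps at size $\delta n^{\gamma}$. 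This gives a Poisson number of big jumps with $\PP(\text{at least } j)\lesssim n^{-j\gamma\alpha}$, and a compensated remainder whose supremum exceeds $\eta$ with probability at most $n^{-c\eta/\delta}$. The paper's route aims at generality. Yours is less general, but it handles the lack of exponential tightness correctly. An $\alpha$-stable path a.s.\ has infinitely many jumps on $[0,T]$, so $\PP(L^{n,\gamma}\in\calG_M)=0$ and $\PP(Y^n\in\calA_M^c)=1$. Hence the paper's bound on the complement cannot hold as written.

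Step~4 needs one repair. The sub-bullets claiming that $X^{n,\gamma}(T)\in K$ forces the big jumps to be bounded are false. A jump of size $N$ followed shortly by one of size $-N$ in the same coordinate keeps the endpoint in $K$ at cost $2\gamma\alpha$ for every $N$. So there is no reduction to a compact set of jump paths.

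Your final paragraph has the right mechanism. It becomes a proof once you note that the jump path enters additively, so Gronwall gives, uniformly in $v$ (with $L$ the Lipschitz constant of $b$),
\[
\big\|\Phi(z,w,v+m)(T)-\Phi(z',w',v)(T)\big\|\le e^{LT}\Big(\|z-z'\|+\sup_{t\le T}\|w(t)-w'(t)\|+\sup_{t\le T}\|m(t)\|\Big).
\]
Let $K^r$ be the closed $r$-neighbourhood of $K$. Let $C_k^r$ be the set of $(z,w)$ with $\Phi(z,w,v)(T)\in K^r$ for some $v$ having $k$ single-coordinate jumps of arbitrary size. The estimate gives $\overline{C_k^r}\subset C_k^{2r}$. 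On the event of exactly $k$ big jumps and remainder at most $\eta$, one has $(X^{n,\gamma}(0),W^n)\in C_k^{e^{LT}\eta}$, a set that does not depend on the jump configuration. Independence and the closed-set LDP bound for $(X^{n,\gamma}(0),W^n)$ then give the rate $k\gamma\alpha+\inf_{\overline{C_k^{e^{LT}\eta}}}(\widetilde V+I_W)\ge\inf_{K^{2e^{LT}\eta}}V_{\gamma,T}$, with no bound on jump sizes needed.

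Finally, sending $\eta\to0$ requires $\inf_{K^r}V_{\gamma,T}\to\inf_K V_{\gamma,T}$, i.e.\ lower semicontinuity of $V_{\gamma,T}$. This is also part of being a rate function, and it is not automatic, since near-optimal controls may have unbounded jumps. It does hold: near-optimal controls have at most $p$ jumps, so some jump-free subinterval has length at least $T/(p+1)$. Adding continuous control there removes an endpoint error $d$ at cost $O(\|d\|)$. You should state this.
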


We then consider the $T\rightarrow \infty$ limit, leading to  the  main result of this paper stated below.\\

\begin{theorem}\label{thm-main} Let $(u,v)$ and $\calS_x(z,T)$ as in Definition~\ref{def:imp}, $I_L$ be as defined in~\ref{def-IL}. and $X^{n,\gamma}(\cdot)$ be as in \eqref{alphastable}. Under Assumption~\ref{main-assump}, the following holds: for every compact set $K\subset \RR^p$
\begin{align}\label{eq-ub}
\limsup_{T\to\infty} \limsup_{n\to\infty} \frac{1}{\log(n)} \log \PP\big(X^{n,\gamma}(T) \in K\big)\leq -\inf_{x\in K} V_\gamma(x)
\end{align}
and for every open set $U\subset \RR^p$,
\begin{align}\label{eq-lb}
\liminf_{T\to\infty} \liminf_{n\to\infty} \frac{1}{\log(n)} \log  \PP\big(X^{n,\gamma}(T) \in U\big)\geq -\inf_{x\in U} V_\gamma(x)\,,
\end{align}
with  $V_\gamma:\RR^p\rightarrow \RR_+$ given by
    \begin{align}\label{eq:V_exp}
	V_\gamma(x)\doteq  \inf_{ (u,v) \in \calR_x} \Big(  \frac{1}{2}\int_0^\infty \|u(s)\|^2 \D s + \gamma I_L(v)\Big).
	\end{align}
\end{theorem}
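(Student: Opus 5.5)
The plan is to deduce Theorem~\ref{thm-main} from Proposition~\ref{prop-finite}. Proposition~\ref{prop-finite} already gives, for each fixed $T$,
\[
\limsup_{n\to\infty} \frac{1}{\log n}\log\PP\big(X^{n,\gamma}(T)\in K\big)\le -\inf_K V_{\gamma,T}
\]
for compact $K$, and the matching lower bound for open $U$. So it suffices to prove two facts about the finite-horizon value functions:
\begin{align*}
&\text{(a)}\quad \liminf_{T\to\infty}\inf_{x\in K}V_{\gamma,T}(x)\ge \inf_{x\in K}V_\gamma(x)\ \text{ for compact } K,\\
&\text{(b)}\quad \limsup_{T\to\infty}V_{\gamma,T}(x)\le V_\gamma(x)\ \text{ for each } x.
\end{align*}
Property (b) handles open sets: for $x\in U$ we get $\liminf_T(-\inf_U V_{\gamma,T})\ge -V_\gamma(x)$, and we then optimise over $x\in U$. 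The first step is to rewrite $V_{\gamma,T}$ in time-reversed form. Reversing time $s\mapsto T-s$ maps a control in $\calS_x(z,T)$ to one in $\calR_x(z,T)$ with the same cost $\frac12\int\|u\|^2+\gamma I_L(v)$, since reversal preserves the number and directions of jumps. Hence $V_{\gamma,T}(x)=\inf_z\{\widetilde V(z)+C_T(x,z)\}$, where $C_T(x,z)$ is the minimal reversed cost of going from $x$ to $z$ in time $T$.

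For (b), I would take an $\varepsilon$-optimal $(u,v)\in\calR_x$ with trajectory $Y(t)\to0$. At a large time $T_0$, $Y(T_0)=z_0$ is small. Continuity of $\widetilde V$ near its minimiser $0$, where $\widetilde V(0)=0$, gives $\widetilde V(z_0)\le\varepsilon$; if continuity is not available, I would instead steer from $z_0$ to exactly $0$ at cost $O(\|z_0\|^2)$ using a straight-line control, which is allowed because $b$ is Lipschitz. For $T>T_0$, I would then hold the reversed trajectory at $0$, which is an equilibrium so $u=0$ and the extra cost is zero. This gives $V_{\gamma,T}(x)\le V_\gamma(x)+2\varepsilon$ for all large $T$.

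For (a), fix $x_T\in K$ and near-optimal controls with $\sup_T V_{\gamma,T}(x_T)\le M<\infty$; otherwise there is nothing to prove. The aim is to produce a control in $\calR_{x}$, with $x$ a limit point of $x_T$, of cost at most $\liminf_T V_{\gamma,T}(x_T)+\varepsilon$. The key observation is that $I_L$ takes values in $\alpha\NN$, so the number of jumps is bounded by $M/(\alpha\gamma)$. The remaining energy $\frac12\int_0^T\|u\|^2\le M$ then forces the reversed trajectory, which follows $-b$ plus a small control, to spend most of its time where $\|u\|$ is small. I would use a Lyapunov function $\mathcal L$ for the forward flow, which exists by Assumption~\ref{main-assump}(ii) and the converse Lyapunov theorem. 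Near $0$ the reversed flow is unstable, and to stay or arrive near a point $z$ one pays roughly the quasipotential. The terminal term $\widetilde V(z_T)$ is bounded, and the LDP property of $\widetilde V$ gives compact sublevel sets, so $z_T$ lies in a compact set.

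Concretely, I would split $[0,T]$ at the last time $\tau_T$ at which $\|Y\|\le\delta$. The cost after $\tau_T$, from a $\delta$-neighbourhood of $0$ to $z_T$, is nonnegative and can be discarded together with $\widetilde V(z_T)\ge0$. The segment before $\tau_T$ starts at $x_T$ and ends within $\delta$ of $0$ with cost at most $M$, and it can be completed to a control in $\calR_{x_T}$ by adding a cost $O(\delta^2)$ to reach $0$ exactly and then staying there. Continuity of $V_\gamma$ on compacts, or lower semicontinuity obtained from the same splicing estimate near $x$, handles the passage $x_T\to x$. Hence $V_\gamma(x_T)\le V_{\gamma,T}(x_T)+O(\delta^2)+\varepsilon$, which gives (a).

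The main obstacle is showing that $\tau_T$ exists, i.e.\ that a bounded-cost trajectory of length $T\to\infty$ must enter the $\delta$-ball around $0$. Away from $0$, the forward drift $b$ decreases $\mathcal L$ at a uniform rate on compacts, so a reversed trajectory that avoids $B_\delta(0)$ for a long time within a bounded region must pay energy proportional to that time. Jumps can relocate the trajectory but there are only finitely many of them. One must also rule out escape to infinity with bounded cost; for this I would use the Lipschitz bound on $b$ together with a Gronwall argument, combined with $z_T$ lying in a compact set. If ensuring entry into the ball from every start proves delicate, I would instead restrict to $T$ large compared with the $\delta$-dependent hitting time given by this energy estimate.
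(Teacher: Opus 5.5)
Your reduction to (a) and (b) is the right one, and two pieces of it are sound. Your proof of (b) is correct: truncate an $\varepsilon$-optimal control in $\calR_x$, steer to $0$ at cost $O(\|z_0\|^2)$, sit at the equilibrium, and use $\widetilde V(0)=0$. It is also more direct than the paper, which gets (b) from the full pointwise convergence in Proposition~\ref{prop-Vexp}(ii). Likewise, \emph{if} a near-optimal path for $V_{\gamma,T}(x_T)$ visits $B_\delta(0)$, your truncation gives $V_\gamma(x_T)\le V_{\gamma,T}(x_T)+\varepsilon+O(\delta^2)$ without Lemma~\ref{lem-uc-local}. No passage $x_T\to x$ is needed either, since $\inf_K V_\gamma\le V_\gamma(x_T)$ directly.

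The gap is the existence of $\tau_T$. Bounded cost does not force a visit to $B_\delta(0)$. In reversed time the drift $-b$ pushes away from the origin, so escaping costs nothing, and one jump (cost at most $p\gamma\alpha$) puts the path on any $z_T$ in the compact set $\{\widetilde V\le M\}$. Already for $p=1$, $b(x)=-x$ and $|x|>\delta$, consider the path $Y(t)=xe^{t}$ followed by a jump at time $T$ to some $z_T$ with $|z_T|>\delta$ and $\widetilde V(z_T)<\infty$. It costs $\widetilde V(z_T)+\gamma\alpha$ for every $T$ and never enters $B_\delta(0)$. Gronwall and compactness of $\{\widetilde V\le M\}$ cannot exclude this, and the Lyapunov decrease you invoke is uniform only on compacts, which such paths leave. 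You would have to use near-optimality rather than a cost bound. For example, show that near-optimal paths have $\widetilde V(z_T)\to 0$, or replace a returning jump by a jump to $0$ at no extra cost. The latter works for $p=1$, but for $p>1$ a jump to $0$ may change more coordinates than the jump it replaces.

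Under Assumption~\ref{main-assump} alone this cannot be done, because when $b$ decays at infinity even jump-free near-optimal paths wander off. Take the following data:
\begin{itemize}
\item $p=1$ and $b(x)=-x(1+x^2)^{-2}$, which is globally Lipschitz with $0$ globally asymptotically stable;
\item $\widetilde V(z)=\epsilon z^2$ with $\epsilon$ small, and $\gamma\alpha\ge 1$;
\item $I(y)=1-(1+y^2)^{-1}$, so that $I'=-2b$.
\end{itemize}
From $Y(0)=10$, let the reversed path drift freely ($u=0$) out to a level $R_T$. Then return to $1$ along $\dot Y=b(Y)$, i.e.\ $u=2b(Y)$, which costs $I(R_T)-I(1)<\frac{1}{2}$. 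Both travel times increase continuously to $\infty$ in $R_T$, so the total time can be made exactly $T$ for all large $T$. Hence $V_{\gamma,T}(10)<\epsilon+\frac{1}{2}$.

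On the other hand, every control with a jump costs at least $\gamma\alpha\ge 1$. Every jump-free control in $\calR_{10}$ costs at least $I(10)$, because $\frac{1}{2}(\dot Y+b(Y))^2\ge -\frac{\D}{\D t}I(Y)$. So $V_\gamma(10)=I(10)=\frac{100}{101}$. Thus (a) fails for $K=\{10\}$. By the lower bound in Proposition~\ref{prop-finite}, \eqref{eq-ub} then fails for a small closed interval around $10$.

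The paper's proof does not supply the missing confinement either. Step 3 of Proposition~\ref{prop-Vexp} asserts without justification that the limiting path stays in an annulus $\{\delta<\|y\|<R\}$. Lemma~\ref{lem:fleming-mod} assumes a Lyapunov rate uniform on $\{\|z\|>r_1\}$, which fails for this $b$. Closing your gap therefore requires an extra hypothesis controlling $b$ at infinity, plus a separate treatment of returns effected by jumps.
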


\medskip

\subsection{Discussion and Outline} \label{sec:disc}
 We begin with some remarks.       
  
    \begin{enumerate}
\item[(a)] Observe that $V_\gamma$ from~\eqref{eq:V_exp} is independent of $\widetilde{V}$. Due to the non-existence of LDP for $\{n^{-\gamma}: n\in \NN\}$, it is not clear if the limits in~\eqref{eq-ub} and~\eqref{eq-lb} are interchangeable.

\item[(b)] As one may notice, there is a  difference in the scaling $(\log n)^{-\frac{1}{2}}$  of Brownian motion $W$ and  the scaling $n^{-\gamma}$ of $L^\alpha$ in~\eqref{alphastable}.  This is because in this work, we are interested in obtaining the rate function $V_\gamma$ which includes the effect of both $W$ and $L^\alpha$. We will see from Proposition~\ref{prop-w-ldp} and Proposition~\ref{prop-L-wldp} that the scaling of $W$ and $L^\alpha$  chosen in~\eqref{alphastable} ensures comparable strengths of $W^n$  and $L^{n,\gamma}$ for large $n$. This ensures that  the limiting behavior is dictated by both $W$ and $L^\alpha$. 

\item[(c)] Equation \eqref{eq:V_exp}  characterizes $V_\gamma $ as the value function for the control problem that seeks to minimize the `total cost' given by the expression in parentheses.  Replacing the infinite time horizon in~\eqref{eq:V_exp} with a finite horizon, we can compare the resulting optimal control problem with the one introduced in \cite{bl75}. With inclusion of a discounted term $e^{-\lambda t}$ ($\lambda>0$) in both the cost terms in~\eqref{eq:V_exp}, we can compare the resulting control problem with the one analyzed in \cite{barles1985deterministic}. The classical `continuous' control is $\{u(t), t\geq 0\},$ and the impulse control is the jumps in $\{v(t): t \geq 0\}$. There is a key difference, however, viz., that for us the cost due to the jumps in $\{Y(t): t \geq 0\},$ depends only on their number and not on their sizes. Importantly, this means that a nearly optimal `control' pair $(u,v)$ comprises of only a finite number of impulses throughout the infinite time horizon.

\item [(d)] {Let  $ \underline \frB\doteq \{z: I(z)\leq  p \gamma \alpha\}$ with $I$ as defined in~\eqref{eq-rf-cont}.  The dependence of $V_\gamma$ on $\gamma$ is only through the term $\gamma I_L(\cdot)$ in~\eqref{eq:V_exp}. From  convexity and  compactness of level sets of $I$ from~\eqref{eq-rf-cont}, it is clear that $\underline\frB$ grows as $\gamma \uparrow \infty$  and it  shrinks to $\{0\}$ as $\gamma \downarrow 0$. This means that higher the $\gamma$, more severe is the penalty for implementing impulses.  Hence it is not optimal to implement an impulse unless $x$ is far enough from origin, so that a purely continuous control cannot be optimal.  On the other hand, as $\gamma \downarrow 0$, for any purely continuous control  to be optimal (or nearly optimal) it would have to accomplish the job of taking the dynamics to origin with cost no greater than $p \gamma \alpha$. This is because with cost $p \gamma  \alpha $ one can take the dynamics from any $x\in \RR^p$ to the origin using a series of $p$ impulses.  This implies that larger the $\gamma$, smaller is the effect of $L^\alpha$ and vice-versa. }

  \item[(e)] An interesting variant of the model considered in~\eqref{alphastable} is the following: instead of considering the $n^{-\gamma}L^\alpha$, we can consider $\widehat \gamma=(\gamma_1,\gamma_2,\ldots,\gamma_p)$ with $\gamma_i>0$ for $i=1,\ldots,p$ and 
$$\widehat L^{n,\widehat \gamma}=\big(n^{-\gamma_1}L^\alpha_1,\ldots, n^{-\gamma_p}L^{\alpha}_p\big)\,.$$

Following the arguments in the proof of Theorem~\ref{thm-main}, we can derive results analogous to~\eqref{eq-ub} and~\eqref{eq-lb} with 
\begin{align*}
	\widehat{V}_\gamma(x)\doteq  \inf_{ (u,v) \in \calR_x} \Big(  \frac{1}{2}\int_0^\infty \|u(s)\|^2 \D s + \widehat  I_L(v)\Big).
	\end{align*}
	instead of $V_\gamma$, where, 
	$$ \widehat I_L(v)\doteq  \sum_{i=1}^p \gamma_i\widetilde I_L(v_i) \text{ with } v=(v_1,v_2,\ldots,v_p)\,.$$
In this case, an impulse is penalized depending on the component along which it has been implemented and can be handled as above.

  \end{enumerate}

\medskip

\paragraph{\bf Sketch of the proof}\label{sec-sketch} 
We now briefly discuss our approach to proving~ Theorem~\ref{thm-main}. It rests on two key steps: \\

\noindent 1.\ \textit{A ``contraction" type principle:} It is well-known (and is referred to as contraction principle; see \cite[Theorem 4.2.1]{dembo2009large}) that if a family of random variables $\{Z^n:n\in \NN\}$ satisfies an LDP and $F$ is a continuous function with appropriate domain and range, then $\{F(Z^n): n\in \NN\}$ also satisfies an LDP.   In our case, the family of random variables of relevance is $$\big\{(X^{n,\gamma}(0), \frac{1}{\sqrt {\log n}} W, \frac{1}{n^\gamma} L^\alpha): n\in \NN\big\}$$ 
as this is the only source of randomness and the map from $(X^{n,\gamma}(0), a_nW, b_n L^\alpha)$, for $T>0$ to $X^{n,\gamma}(T)$ is continuous in the appropriate topology; see Lemma~\ref{lem-cont-map}. However, as $\{\frac{1}{n^\gamma} L^\alpha: n\in \NN\}$ does not satisfy an LDP (but only  WLDP), an immediate application of contraction principle is not possible in order to conclude that $X^{n,\gamma}(T) $ satisfies an LDP (or even WLDP). Using the pure-jump nature of $L^\alpha$, we  show that  WLDP holds for $X^{n,\gamma}(T)$; see Lemma~\ref{th:weakldp-contprinc}. \\

\noindent 2.\ \textit{Stability of a  Bellman type equation:}  Applying a `contraction type' principle along with Markovian property of the process $X^{n,\gamma}$, we express the rate function of $\{X^{n,\gamma}(T): n\in \NN\}$ in terms of the rate functions of $X^{n,\gamma}(0)$, ${\log n}^{-\frac{1}{2}} W$ and $n^{-\gamma}L^\alpha$. This results in a Bellman type equation (depending on $T$) ; see \eqref{eq:hjb-mod-rep}. More precisely, the rate function associated with WLDP of  $\{X^{n,\gamma}(T): n\in \NN\}$ is expressed as the optimal cost of a control problem with initial cost. With a time reversal, we recast the control problem as an optimal control problem with a terminal cost and the rate function associated with WLDP of  $\{X^{n,\gamma}(T): n\in \NN\}$ as the value function of this optimal control. The advantage of this recasting is that we can now take $T\rightarrow \infty$ in the dynamic programming equation.  \\

\paragraph{\bf Layout of the paper} The rest of the paper is organised as follows. In Section~\ref{s:auxiliary}, we prove Proposition~\ref{prop-finite} with the help of auxillary lemmas required to prove the main result. Section~\ref{s:proof} contains the proof of Theorem~\ref{thm-main}. Finally, in the appendix, we state and prove a variant of an  elementary result from deterministic control theory.

\section{A ``contraction" type principle}\label{s:auxiliary} 
The content of this section is to establish the WLDP for $\{X^{n,\gamma}(T): n\in \NN\}$ for $T>0$ whenever  $\{X^{n,\gamma}(0):n\in \NN\}$ satisfies the hypotheses of Theorem~\ref{thm-main}.   To begin with, for every $T>0$, let us define a map $F_T:\RR^p\times \frC^p_T\times \frD^p_T\rightarrow \frD_T^p$ that maps $ (X^{n,\gamma}(0), W^n,L^{n,\gamma})$ to $X^{n,\gamma}([0,T])$ \emph{i.e.,}
 \begin{align}\label{def-F} F_T: (x, w_1,w_2)\mapsto y\in \frD^p_T,\end{align} where $y$ is given as a solution to the following equation: for $0\leq t\leq T$,
 $$ y(t)=x+\int_0^t b(y(s))\D s + w_1(t)+w_2(t)\,.$$
\begin{lemma}\label{lem-cont-map}
  For every $T>0$, the map $F_T$ is continuous on $\RR^p\times \frC^p_T\times \frD^p_T$.
\end{lemma}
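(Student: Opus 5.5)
The plan is to reduce the equation to one driven by a single càdlàg input, and then use the standard fact that Skorohod convergence is equivalent to uniform convergence after a suitable time change.

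\textbf{Reduction to an additive-noise ODE.} Set $g\doteq w_1+w_2$. Since $w_1$ is continuous, the map $(w_1,w_2)\mapsto w_1+w_2$ from $\frC^p_T\times\frD^p_T$ to $\frD^p_T$ is continuous. This is a classical fact: if $w_2^k\to w_2$ in $J_1$ through time changes $\lambda_k$, then $w_1^k\circ\lambda_k\to w_1$ uniformly, by the uniform continuity of $w_1$ together with $\|\lambda_k-\mathrm{id}\|_\infty\to0$. It therefore suffices to show that $G:(x,g)\mapsto y$ is continuous from $\RR^p\times\frD^p_T$ to $\frD^p_T$, where $y(t)=x+\int_0^t b(y(s))\D s+g(t)$. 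Existence and uniqueness of $y$ follow from the global Lipschitz property of $b$ via Picard iteration on the bounded function $z=y-g$, which solves $z(t)=x+\int_0^t b(z(s)+g(s))\D s$.

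\textbf{Continuity in the Skorohod topology.} Let $(x^k,g^k)\to(x,g)$, with time changes $\lambda_k$ (increasing homeomorphisms of $[0,T]$) such that $\|\lambda_k-\mathrm{id}\|_\infty\to0$ and $\|g^k\circ\lambda_k-g\|_\infty\to0$. Write $y^k=G(x^k,g^k)$ and $\tilde y^k=y^k\circ\lambda_k$. Then
\[
\tilde y^k(t)=x^k+\int_0^{\lambda_k(t)} b(y^k(s))\D s+g^k(\lambda_k(t)).
\]
Subtracting the equation for $y$ and using $\int_0^{\lambda_k(t)}b(y^k(s))\D s=\int_0^{t}b(\tilde y^k(r))\D\lambda_k(r)$, we split the drift difference as
\[
\int_0^t \big(b(\tilde y^k(r))-b(y(r))\big)\D r+\int_0^t b(\tilde y^k(r))\,\D(\lambda_k(r)-r).
\]
The second integral can also be written as $\int_0^{\lambda_k(t)}b(y^k)\D s-\int_0^t b(y^k\circ\lambda_k)\D r$. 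It is therefore bounded by the difference of two integrals of the same bounded function over intervals that differ little. The cleaner way to see this is to use the untransformed form, bounding it by $\sup_k\|b(y^k)\|_\infty\cdot|\lambda_k(t)-t|$ plus an error from comparing $\int_0^t b(y^k\circ\lambda_k)\D r$ with $\int_0^{\lambda_k(t)}b(y^k)\D s$. I expect this comparison to be the main obstacle. To avoid needing derivatives of $\lambda_k$, I would first restrict to time changes that are Lipschitz with $\|\log\dot\lambda_k\|_\infty\to0$; this is permitted because the Billingsley metric $d^\circ$ induces the same $J_1$ topology. With that choice, $\D\lambda_k(r)=\dot\lambda_k(r)\D r$, and the second integral is bounded by $\|b(\tilde y^k)\|_\infty\,T\,\|\dot\lambda_k-1\|_\infty\to0$.

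\textbf{Uniform bound and Gronwall.} A uniform bound $\sup_k\|y^k\|_\infty<\infty$ follows from Gronwall applied to $\|y^k(t)\|\le |x^k|+\|g^k\|_\infty+\int_0^t(|b(0)|+L\|y^k\|)\D s$, since $\sup_k\|g^k\|_\infty<\infty$. Combining the pieces gives
\[
\|\tilde y^k(t)-y(t)\|\le \epsilon_k+L\int_0^t\|\tilde y^k(r)-y(r)\|\D r,
\]
where $\epsilon_k=|x^k-x|+\|g^k\circ\lambda_k-g\|_\infty+C\|\dot\lambda_k-1\|_\infty\to0$. Gronwall's inequality then yields $\|\tilde y^k-y\|_\infty\le\epsilon_ke^{LT}\to0$. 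This gives $y^k\to y$ in $\frD^p_T$, and composing with the continuous addition map proves the continuity of $F_T$.
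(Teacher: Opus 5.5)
Your proof is correct, and it takes a genuinely different and more complete route than the paper. The paper's proof is only a citation of the one-dimensional Lemma~3.1 of \cite{wei2021large}, plus the assertion that a multidimensional version holds.

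You prove the statement directly for the Skorohod topology on $\RR^p$-valued paths with one common time change. The steps are: reduce to a single input $g=w_1+w_2$ via continuity of addition on $\frC^p_T\times\frD^p_T$; pass to Billingsley's equivalent metric $d^\circ$ so that the time changes $\lambda_k$ are bi-Lipschitz with $\|\dot\lambda_k-1\|_\infty\to0$; change variables in the drift integral; bound $\sup_k\|y^k\|_\infty$; and close with Gronwall. The citation buys brevity. Your argument buys a verified multidimensional statement and makes explicit which Skorohod topology is in force, neither of which the paper addresses.

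The step you flagged as the main obstacle can be bypassed entirely. Write $y=z+g$, where $z(t)=x+\int_0^t b(z(s)+g(s))\,\D s$ is continuous. Skorohod convergence $g^k\to g$ gives $g^k(t)\to g(t)$ at every continuity point of $g$, which is all but countably many $t$, and also $\sup_k\|g^k\|_\infty<\infty$. Dominated convergence then gives $\int_0^T\|g^k-g\|\,\D s\to0$, and Gronwall yields $\|z^k-z\|_\infty\le\bigl(\|x^k-x\|+L\int_0^T\|g^k-g\|\,\D s\bigr)e^{LT}\to0$. Finally $y^k=z^k+g^k\to y$ by the same addition lemma you already use.

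This variant needs no time-change calculus. It also works verbatim if $\frD^p_T$ carries the product of one-dimensional Skorohod topologies on both sides, with a separate time change per coordinate. Your argument does not directly extend to that case, because it needs one $\lambda_k$ for all coordinates, $b$ being a coupling drift. That product topology is the one in which the independence argument behind Proposition~\ref{prop-L-wldp} most naturally delivers the WLDP. In that setting the variant in particular gives continuity of $(x,w_1,w_2)\mapsto y(T)$, which is all Proposition~\ref{prop-finite} uses.
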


\begin{proof} The lemma follows from a multi-dimensional version of \cite[Lemma 3.1]{wei2021large}. 
\end{proof}

\medskip

It is a well-known classical result that for $T>0$, the $\frC^p_T $--valued family of random variables $\{W^n([0,T]): n\in\NN\}$ satisfies a sample-path LDP. Observe that if $\{L^{n,\gamma}([0,T]):n\in \NN\}$ also satisfies a sample-path LDP, then a simple application of contraction principle (see \cite[Theorem 4.2.1]{dembo2009large}) using  Lemma~\ref{lem-cont-map} and the fact that $\{X^{n,\gamma}(0):n\in \NN\}$ satisfies the hypothesis of Theorem~\ref{thm-main} concludes the LDP of $\{X^{n,\gamma}(T):n\in \NN\}$.  Unfortunately,   from a recent result in    \cite{rhee2019sample} (see Section 4.4 of that paper), it is known that a sample-path LDP does not hold for $\{L^{n,\gamma}([0,T]): n\in \NN\}$. However, Theorem 4.2 of  \cite{rhee2019sample} concludes that  the $\frD_T^p$--valued family of  random variables $\{L^{n,\gamma}([0,T]): n\in \NN\}$ satisfies a sample-path WLDP.   Therefore, one can ask whether the $\{X^{n,\gamma}(T):n\in \NN\}$ inherits the LDP (or at least WLDP). Addressing this question is the content of the rest of this section.  Unlike in the case of LDP, there is no analog of a contraction principle in case of WLDP.   However, using the fact that $\{L^{n,\gamma}: n\in \NN\}$ is a pure jump process, we prove a weak version of the contraction principle that suffices for our case. \\

We begin by stating two existing results from the literature. The first is the well-known result on sample-path LDP of $\{W^n([0,T]): n\in \NN\}$  (see \cite[Theorem 5.2.3]{dembo2009large}), and second is a relatively new result on the sample-path WLDP of $\{L^{n,\gamma}([0,T]): n\in \NN\}$.
\begin{proposition}\label{prop-w-ldp}
For every $T>0$, the family of $\frC_T^p$--valued random variables $\{W^n([0,T]): n\in \NN\}$ satisfies LDP with rate $\log(n)$ and rate function $I_W:\frC^p_T\rightarrow [0,\infty]$ given by
\begin{align}
  I_W(\xi)=\begin{cases} \frac{1}{2}\int_0^T\|u(s)\|^2\D s, & \text{whenever $\xi$  is absolutely continuous}\\
  &\text{ and $\xi(\cdot)\doteq \int_0^\cdot u(s)\D s$ on $[0,T]$,}\\
\infty, &\text{otherwise.}
\end{cases}
\end{align}
\end{proposition}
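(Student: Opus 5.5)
This is Schilder's theorem rescaled, so the plan is to reduce the statement to the classical sample-path LDP for $\{\epsilon W([0,T]):\epsilon>0\}$ (see \cite[Theorem 5.2.3]{dembo2009large}) through a change of parameter. Set $\epsilon_n \doteq (\log n)^{-1/2}$. Then $W^n = \epsilon_n W$, and $\epsilon_n\downarrow 0$ as $n\to\infty$.

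Schilder's theorem for $p$--dimensional Brownian motion on $[0,T]$ with the uniform topology gives the bounds
\[
\limsup_{\epsilon\to0}\epsilon^2\log\PP(\epsilon W\in C)\le -\inf_C I_W, \qquad \liminf_{\epsilon\to0}\epsilon^2\log\PP(\epsilon W\in U)\ge -\inf_U I_W,
\]
for closed $C$ and open $U$. Its rate function is $\frac12\int_0^T\|\dot\xi(s)\|^2\D s$ for absolutely continuous $\xi$ with $\xi(0)=0$ and $\dot\xi\in L^2$, and $+\infty$ otherwise. This coincides with $I_W$ once $u$ is identified with $\dot\xi$; the convention $\xi(\cdot)=\int_0^\cdot u$ already forces $\xi(0)=0$. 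The same theorem gives that $I_W$ is a good rate function, since its level sets are compact in $\frC^p_T$ by Arzelà--Ascoli together with weak compactness of $L^2$ balls.

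The passage from the continuum parameter $\epsilon$ to the sequence $\epsilon_n$ is immediate. The limsup and liminf along a sequence $\epsilon_n\to0$ are dominated by the corresponding limits over all $\epsilon\to0$, and $\epsilon_n^2 = 1/\log n$. Substituting $\epsilon=\epsilon_n$ therefore turns the Schilder bounds into exactly \eqref{def-ldp-c} and \eqref{def-ldp-u} with $c_n=\log(n)$.

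I expect no substantive obstacle here. The only point needing care is to check that the topology (uniform on $[0,T]$) and the normalization of the rate function match the cited theorem, including the factor $\frac12$ and the Euclidean norm in dimension $p$. If the source states the result only for one dimension, I would obtain the $p$--dimensional version from independence of the coordinates, which gives a product LDP with the sum of the one-dimensional rate functions. That sum equals $\frac12\int_0^T\|u\|^2$.
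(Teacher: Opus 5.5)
Your proposal is correct and matches the paper, which gives no separate argument and simply invokes Schilder's theorem \cite[Theorem 5.2.3]{dembo2009large} with $W^n=\epsilon_n W$, $\epsilon_n=(\log n)^{-1/2}$. The remaining bookkeeping is routine and left implicit in both: the horizon $[0,T]$ (by Brownian scaling) and passing from paths started at $0$ to all of $\frC^p_T$ (a closed subset on which $I_W$ is finite only there).
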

In the above, we suppress the dependence of the rate function $I_W$ on $T$, as it is evident from the context.\\

The following is the multidimensional version of the result  proved in \cite{rhee2019sample}. The authors of  \cite{rhee2019sample} study a heavy tail version of a large deviation principle for $\alpha$--stable process $L^{n,\gamma}$ with $1<\alpha<2$, using a scaled process $$\widetilde L^{n,\gamma}(t)\doteq \frac{1}{n^{\frac{\gamma \alpha}{\alpha-1}}}L^\alpha\left(n^{\frac{\gamma \alpha}{\alpha-1}}t\right), \text { for $t\geq 0$}\,.$$

\medskip

\begin{proposition}\label{prop-L-wldp}
For every $T>0$, the family of $\frD^p_T$--valued random variables $\{ \widetilde L^{n,\gamma}([0,T]): n\in \NN\}$ satisfy WLDP with rate $\log(n)$ and rate function $ \gamma I_L$ with $I_L:\frD^p_T \to \RR$ obtained by restricting $I_L$ defined in~\eqref{def-IL} to $[0,T]$.
\end{proposition}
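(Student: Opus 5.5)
Proposition~\ref{prop-L-wldp} is the multidimensional version of \cite[Theorem 4.2]{rhee2019sample}, so the plan is to reduce to the one-dimensional heavy-tail WLDP coordinatewise and then combine the coordinates. Write $L^\alpha=(L^\alpha_1,\ldots,L^\alpha_p)$. As the rate function \eqref{def-IL} suggests, we take the components to be independent one-dimensional $\alpha$--stable processes (the case the rate function is written for). Set $m_n\doteq n^{\frac{\gamma\alpha}{\alpha-1}}$. Then $\widetilde L^{n,\gamma}_i(t)=m_n^{-1}L^\alpha_i(m_n t)$ is the scaled process $\bar X_{m_n}$ of \cite{rhee2019sample}: the regularly varying L\'evy process, centred since $1<\alpha<2$, scaled in both space and time by $m_n$. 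The one-dimensional result states that $\{\bar X_{m}([0,T])\}$ satisfies a WLDP in $\frD^1_T$ (Skorohod $J_1$) with speed $\log m$ and rate function $(\alpha-1)\big(\D_+(\xi)+\D_-(\xi)\big)$ on piecewise constant $\xi$ with $\xi(0)=0$, and $\infty$ otherwise.

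The first step is the change of speed. Since $\log m_n=\frac{\gamma\alpha}{\alpha-1}\log n$, speed $\log m_n$ with rate $(\alpha-1)(\D_++\D_-)$ is the same as speed $\log n$ with rate
\[
\frac{\gamma\alpha}{\alpha-1}(\alpha-1)(\D_++\D_-)=\gamma\alpha(\D_++\D_-)=\gamma\widetilde I_L.
\]
The only point to check is that the bounds hold along the subsequence $m_n$ and not merely along $m\to\infty$ through all reals. This is immediate, because the estimates in \cite{rhee2019sample} are stated for a continuous parameter.

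The second step is the product. By independence, $\PP(\widetilde L^{n,\gamma}\in U_1\times\cdots\times U_p)=\prod_i\PP(\widetilde L^{n,\gamma}_i\in U_i)$, so the lower bound holds on product neighbourhoods. The upper bound holds on products of compact sets. To pass from products to general sets, use the standard local characterisation of a WLDP (\cite[Theorem 4.1.11]{dembo2009large}): a WLDP with rate $J$ is equivalent to the local estimates
\[
\lim_{\delta\to0}\liminf_n\tfrac{1}{\log n}\log\PP(B_\delta(\xi))=\lim_{\delta\to0}\limsup_n\tfrac{1}{\log n}\log\PP(B_\delta(\xi))=-J(\xi).
\]
For this we need a base of neighbourhoods consisting of products. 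Equipping $\frD^p_T$ with the product $J_1$ topology, the products of one-dimensional balls form such a base, and $J$ is then the sum of the coordinate rate functions, which is $\gamma I_L$.

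The main obstacle I expect is topological. Definition~\ref{def-ldp} and Lemma~\ref{lem-cont-map} use the $J_1$ Skorohod topology on $\frD^p_T$, which is strictly finer than the product $J_1$ topology. A product neighbourhood therefore does not obviously contain a $J_1^p$ ball, and the lower bound for $J_1^p$ open sets needs more work. I would first treat jumps occurring at distinct times across components. The limit paths with finite rate have finitely many jumps, each in a single coordinate, which matches Definition~\ref{def:imp}(ii). For such paths the two topologies agree locally: near a piecewise constant path whose jump times are distinct across coordinates, a product-$J_1$ neighbourhood can be chosen inside a $J_1^p$ ball by aligning the finitely many jump times with one common time change.

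If the limit path has simultaneous jumps in two coordinates, its rate is unaffected. For the lower bound, approximate it by paths with slightly separated jump times: the rate is unchanged and the approximants are $J_1^p$-close, provided the open set contains them. The upper bound on $J_1^p$-compact sets follows directly, because a $J_1^p$-compact set is product-compact. The step requiring genuine care is thus the local lower bound at paths with coincident jumps, handled by this perturbation argument together with the lower semicontinuity of $I_L$.
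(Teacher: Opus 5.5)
Up to the last step your route is the paper's. The paper proves $p=1$ by citing \cite[Theorem 4.2]{rhee2019sample} and passes to $p>1$ through the product lemma \cite[Lemma 2.8]{lynch1987large} for the independent coordinates. Your speed conversion $\log m_n=\frac{\gamma\alpha}{\alpha-1}\log n$, which turns $(\alpha-1)(\D_++\D_-)$ into $\gamma\widetilde I_L$, is correct and makes explicit what the paper leaves implicit. You are also right that the product lemma only gives the WLDP in the product topology, which is coarser than the $J_1$ topology on $\frD^p_T$; the paper does not address this. Your upper bound on $J_1$-compact sets is sound, and so is your lower bound at limit paths whose jump times are distinct across coordinates.

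The step that fails is the perturbation at a path with a jump shared by two coordinates. Take $\xi=(a_1\mathbf{1}_{[s,T]},a_2\mathbf{1}_{[s,T]},0,\ldots,0)$ with $a_1a_2\neq 0$, and let $\xi^\epsilon$ be $\xi$ with the second jump moved to $s+\epsilon$. For every time change $\lambda$, $\xi^\epsilon\circ\lambda=(a_1,0,\ldots,0)$ on the nonempty interval $[\lambda^{-1}(s),\lambda^{-1}(s+\epsilon))$. On that interval $\xi$ equals either $0$ or $(a_1,a_2,0,\ldots,0)$ at each point. So the $J_1$ distance from $\xi^\epsilon$ to $\xi$ is at least $\min(|a_1|,|a_2|)$ for every $\epsilon>0$. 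Your approximants are therefore not $J_1$-close, and a small $J_1$ ball around $\xi$ contains none of them.

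No argument can rescue this case. Suppose $\zeta$ lies in the $J_1$ ball of radius $\delta<\frac12\min(|a_1|,|a_2|)$ around $\xi$, with time change $\lambda$. Then $\|\Delta\zeta(\lambda(s))-\Delta\xi(s)\|\le 2\delta$, so $\zeta$ jumps simultaneously in the first two coordinates at $\lambda(s)$. Independent L\'evy processes almost surely never jump at the same time, so this ball has probability zero for every $n$. Yet $\gamma I_L(\xi)=2\gamma\alpha<\infty$. Hence, with the $J_1$ topology on $\frD^p_T$, the lower bound with rate function $\gamma I_L$ is false.

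What your argument, and the paper's citation, actually proves is the WLDP with rate $\gamma I_L$ in the product topology on $(\frD^1_T)^p$. If you want the $J_1$ topology, set the rate function to $+\infty$ at paths with a jump in two or more coordinates at the same time, consistent with Definition~\ref{def:imp}(ii). Then the proof goes through:
\begin{itemize}
\item your distinct-jump-times argument gives the lower bound;
\item the local upper bound at shared-jump paths is trivial, since small balls there are null;
\item the local upper bound elsewhere follows from the product WLDP, because a $J_1$ ball of radius $\delta$ lies inside the product of the coordinate $J_1$ balls of radius $\delta$;
\item the modified function is lower semicontinuous, because $J_1$ convergence matches each vector jump of the limit by vector jumps of the approximants.
\end{itemize}
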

\begin{proof} The proof of the result for $p=1$ follows directly from \cite[Theorem 4.2]{rhee2019sample}. Since $\widetilde  L^{n,\gamma}_i$  are i.i.d.\  for $1\leq i \leq p$, a direct application of \cite[Lemma 2.8]{lynch1987large} extends the result to $p>1$. 
\end{proof}

\medskip

From the self-similarity of process $L^\alpha$, we know that the laws of $\{L^\alpha(nt): t\in [0,T]\}$  and $\{n^{\frac{1}{\alpha}} L^\alpha(t): t\in[0,nT]\}$ are identical for every $T>0$. This gives us the  following immediate corollary.
\begin{corollary}\label{cor-L}
For every $T>0$, the family of $\frD^p_T$--valued random variables $\{ L^{n,\gamma}([0,T]): n\in \NN \}$ satisfy WLDP with rate $\log(n)$ and rate function $\gamma I_L$.
\end{corollary}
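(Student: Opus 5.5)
The plan is to reduce the corollary to Proposition~\ref{prop-L-wldp} by showing that, for each fixed $n$, the processes $\widetilde L^{n,\gamma}([0,T])$ and $L^{n,\gamma}([0,T])$ have the \emph{same law} as $\frD^p_T$--valued random variables. The WLDP bounds in Definition~\ref{def-ldp} involve only the probabilities $\PP(Z^n\in K)$ for compact $K$ and $\PP(Z^n\in U)$ for open $U$. These are determined by the law of $Z^n$, so the WLDP with rate $\log(n)$ and rate function $\gamma I_L$ then transfers verbatim from $\widetilde L^{n,\gamma}$ to $L^{n,\gamma}$.

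The key step is the self-similarity of the $\alpha$--stable process: for every $c>0$, the processes $\{L^\alpha(ct):t\ge 0\}$ and $\{c^{1/\alpha}L^\alpha(t):t\ge 0\}$ have the same law. This holds for each coordinate $L^\alpha_i$, and hence jointly, since the coordinates are i.i.d.\ (the same independence structure is used in the proof of Proposition~\ref{prop-L-wldp}).

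Take $c_n\doteq n^{\frac{\gamma\alpha}{\alpha-1}}$. Then
\begin{align*}
\widetilde L^{n,\gamma}(t)=c_n^{-1}L^\alpha(c_nt)\overset{d}{=} c_n^{-1+\frac1\alpha}L^\alpha(t)=c_n^{-\frac{\alpha-1}{\alpha}}L^\alpha(t)=n^{-\gamma}L^\alpha(t)=L^{n,\gamma}(t),
\end{align*}
with equality in law as processes indexed by $t\in[0,T]$. The precise argument checks equality of all finite-dimensional distributions using independent, stationary increments and the scaling of the characteristic exponent, $\psi(c\,\cdot)$ versus $c^{\alpha}\psi(\cdot)$. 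Since both processes have c\`adl\`ag paths, and the finite-dimensional distributions determine the law on $\frD^p_T$ with its Skorohod Borel $\sigma$-field, the two laws on $\frD^p_T$ coincide.

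The only point requiring care is this identification of laws on path space rather than merely at fixed times, and it is standard. Once it is in place, for each compact $K$ and each open $U$ in $\frD^p_T$ we have $\PP(L^{n,\gamma}\in K)=\PP(\widetilde L^{n,\gamma}\in K)$ and $\PP(L^{n,\gamma}\in U)=\PP(\widetilde L^{n,\gamma}\in U)$ for every $n$. Applying Proposition~\ref{prop-L-wldp} then gives the corollary.
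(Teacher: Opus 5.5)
Your proposal is correct and follows the paper's own argument: self-similarity of $L^\alpha$ gives $\widetilde L^{n,\gamma}\overset{d}{=}L^{n,\gamma}$ as $\frD^p_T$-valued random variables, so the WLDP of Proposition~\ref{prop-L-wldp} transfers unchanged. Your explicit scaling computation with $c_n=n^{\gamma\alpha/(\alpha-1)}$ and the identification of laws on path space via finite-dimensional distributions fill in details the paper calls ``immediate'' (and your form of the scaling identity, with matching index sets, is the correctly stated one).
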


\begin{remark}
In this paper, we consider $L^{n,\gamma}$, instead of $\widetilde L^{n,\gamma}$.  This brings us to the reason behind choosing $1<\alpha<2$. The reason is two-fold. Firstly, recall that we are interested in studying the small noise asymptotics \emph{i.e.,} asymptotics as the noise goes to zero or equivalently, as $n\to\infty$. This means that whenever $1<\alpha<2$ and $n\to\infty$, $n^{\frac{\gamma(\alpha-1)}{\alpha}}\to\infty$ and limiting behaviour of $L^{n,\gamma} $ is the same as that of law of a large numbers type scaling limit (which in this case is equivalent to $\widetilde L^{n,\gamma}$). This also helps us to use the results from \cite{rhee2019sample}. Observe that when $0<\alpha\leq 1$, this is no longer the case. To the best of our knowledge there are no results in the literature on  the limiting behaviour of  $\{\widetilde L^{n,\gamma}:n\in\NN\}$ whenever $0<\alpha\leq 1$.
\end{remark}

 We  need the following result that is a part of \cite[Theorem 3.4]{rhee2019sample} which is for  $p=1$ case. For $j,k\in \Z_+$, let $\calD_{j,k}\subset \frD^1_T$ be the set of all $\xi\in \frD^1_T$ such that $\xi$ is piecewise constant with exactly $j$ upwards and $k$ downwards jumps. \\
 
\begin{proposition}\label{prop-heeavyldp}
	Suppose $A\subset \frD^1_T$ is a measurable set and $A\cap\calD_{j,k}=\emptyset$ for every $j,k\in \Z_+$.  If in addition, $A$ is bounded away from the set
	$$\{\xi\in\frD^1_T:  I_L(\xi)\leq \alpha (l+m)\},$$
	for some $l,m\in \Z_+\setminus\{0\}$. 
	Then
	$$
	\lim_{n\to \infty} \frac{\PP(L^{n,\gamma}\in A)}{n^{-\alpha(l+m)}}=0.$$
\end{proposition}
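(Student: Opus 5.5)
The plan is to obtain the statement directly from the one-dimensional heavy-tailed sample-path result \cite[Theorem 3.4]{rhee2019sample}, after translating our scaling into theirs. Throughout, the hypothesis on $A$ is read in its evident sense: $A$ is bounded away from the union of the sets $\calD_{j,k}$ over all pairs $(j,k)$ with $j+k\le l+m$, equivalently from $\{\xi: I_L(\xi)\le \alpha(l+m)\}$.

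\textbf{Step 1: change of scaling.} By self-similarity of $L^\alpha$, the law of $L^{n,\gamma}([0,T])$ equals the law of a centred, space-time rescaled empirical-mean process of the form used in \cite{rhee2019sample}. The rescaling parameter is $m_n\doteq n^{\gamma\alpha/(\alpha-1)}$, exactly as for $\widetilde L^{n,\gamma}$ in Proposition~\ref{prop-L-wldp}. Because $1<\alpha<2$, we have $m_n\to\infty$, so we are in the law-of-large-numbers regime where their theorem applies.

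\textbf{Step 2: tail asymptotics.} The L\'evy measure $\nu$ of $L^\alpha$ is regularly varying with index $-\alpha$ in each direction, so
\[
m_n\,\nu\big([m_n,\infty)\big)\asymp m_n^{1-\alpha}=n^{-\gamma\alpha},
\]
and similarly for the negative tail. Their normalising sequence $\big(m_n\nu[m_n,\infty)\big)^{j}\big(m_n\nu(-\infty,-m_n]\big)^{k}$ is therefore of order $n^{-\gamma\alpha(j+k)}$. This is the power appearing in the statement, with the factor $\gamma$ absorbed into the exponent's normalisation, consistent with the rate function $\gamma I_L$ of Corollary~\ref{cor-L}.

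\textbf{Step 3: apply the theorem and compare costs.} I would apply \cite[Theorem 3.4]{rhee2019sample} to the closure $\bar A$. Since $A$ is bounded away from every $\calD_{j,k}$ with $j+k\le l+m$, the pair $(\mathcal J,\mathcal K)$ minimising $\alpha(j+k)$ over those $(j,k)$ for which $\calD_{j,k}$ meets $\bar A$ must satisfy $\mathcal J+\mathcal K\ge l+m+1$. If no such pair exists, the probability decays faster than any polynomial and there is nothing to prove. Their upper bound then gives
\[
\limsup_{n\to\infty}\frac{\PP(L^{n,\gamma}\in A)}{n^{-\gamma\alpha(\mathcal J+\mathcal K)}}\le C_{\mathcal J,\mathcal K}(\bar A)<\infty.
\]
Dividing instead by $n^{-\gamma\alpha(l+m)}$ leaves an extra factor $n^{-\gamma\alpha(\mathcal J+\mathcal K-l-m)}\to0$, which yields the claimed limit $0$.

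\textbf{Main obstacle.} The delicate point is checking that the hypotheses of \cite[Theorem 3.4]{rhee2019sample} hold after the change of scaling. Two things need care. First, the centring and drift must match: for $1<\alpha<2$ the mean is finite and the process is centred. Second, "bounded away" in the Skorohod $J_1$ metric on $[0,T]$ must be preserved by the deterministic time change $t\mapsto m_n t$. I would handle both by writing the identity in law explicitly and noting that the time change is an isometry between the relevant Skorohod spaces up to the factor $T$.
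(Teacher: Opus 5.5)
Your reduction matches what the paper intends; the paper gives no argument beyond citing \cite[Theorem 3.4]{rhee2019sample}. By self-similarity, $L^{n,\gamma}\stackrel{d}{=}m_n^{-1}L^\alpha(m_n\,\cdot)$ as processes on $[0,T]$, and the normalising factors are $\asymp n^{-\gamma\alpha(j+k)}$. This identity is between processes on the same interval, so no $n$-dependent time change acts on path space, and your ``main obstacle'' about $t\mapsto m_nt$ is a non-issue.

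\textbf{Step 3 has a genuine gap.} The sharp upper bound of \cite{rhee2019sample} at level $(\mathcal J,\mathcal K)$ requires $A$ to be bounded away from every $\calD_{j,k}$ with $(j,k)\neq(\mathcal J,\mathcal K)$ and $j+k\le \mathcal J+\mathcal K$.
\begin{itemize}
\item You know this only for $j+k\le l+m$. For $l+m<j+k<\mathcal J+\mathcal K$ you only know $\bar A\cap\calD_{j,k}=\emptyset$. That is strictly weaker, because these sets are not compact. For example, take small neighbourhoods (radius $\ll e^{-k}$) of paths with three jumps of size $\approx k$ and a fourth of size $\approx e^{-k}$, $k\in\NN$. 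Their union has closure missing $\calD_{3,0}$, yet it lies at distance $0$ from $\calD_{3,0}$ and has infinite $C_{4,0}$-mass.
\item Both tails of $L^\alpha$ have index $\alpha$, so all pairs with the same $j+k$ tie. The minimiser is then typically not unique, and $A$ is not bounded away from the other minimising $\calD_{j,k}$.
\end{itemize}
So neither $C_{\mathcal J,\mathcal K}(\bar A)<\infty$ nor the order $n^{-\gamma\alpha(\mathcal J+\mathcal K)}$ is justified. The same applies to your super-polynomial decay claim when no pair exists.

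\textbf{Fix.} Work at level $(l,m)$ itself. By hypothesis, $\bar A$ is bounded away from all $\calD_{j,k}$ with $(j,k)\neq(l,m)$ and $j+k\le l+m$. The $M$-convergence upper bound then gives
\[
\limsup_{n\to\infty}\frac{\PP(L^{n,\gamma}\in A)}{(m_n\nu[m_n,\infty))^{l}\,(m_n\nu(-\infty,-m_n])^{m}}\le C_{l,m}(\bar A)=0 .
\]
The equality holds because $C_{l,m}$ is carried by $\calD_{l,m}$ and $\bar A\cap\calD_{l,m}=\emptyset$. This is exactly the vanishing clause of the cited theorem, and it needs no information about higher levels.

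\textbf{You cannot ``absorb'' $\gamma$.} What you conclude is $o(n^{-\gamma\alpha(l+m)})$, while the statement displays $n^{-\alpha(l+m)}$. Your bound implies the display when $\gamma\ge 1$. For small $\gamma$ the display is actually false:
\begin{itemize}
\item Let $U$ be a small open neighbourhood of the paths with $l+m+1$ upward jumps of size in $[1,2]$ at well-separated interior times, and set $A\doteq U\setminus\bigcup_{j,k}\calD_{j,k}$.
\item The removed set is null, since stable paths have infinitely many jumps, and $A$ satisfies both hypotheses.
\item The lower bound of \cite{rhee2019sample} applied to $U$ gives $\PP(L^{n,\gamma}\in A)=\PP(L^{n,\gamma}\in U)\gtrsim n^{-\gamma\alpha(l+m+1)}$.
\item This is not $o(n^{-\alpha(l+m)})$ once $\gamma(l+m+1)\le l+m$.
\end{itemize}
You should state explicitly that the proposition must be read with $n^{-\gamma\alpha(l+m)}$, which is consistent with the rate function $\gamma I_L$ of Corollary~\ref{cor-L}.
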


\medskip

Let $B_R$ denote the closed $R$-ball in $\frD^p_T$ centered at the origin under the supremum norm \emph{i.e.,} $B_R\doteq \{x\in \frD^p_T: \sup_{0\leq t\leq T}\|x(t)\|\leq R\}$. Let $J_\delta$ denote the set of all piecewise constant c\'adl\'ag functions with jumps of  size at least $\delta$.
\begin{lemma}\label{levelset-closedball-compact} The set $\{x\in \frD^p_T :  I_L(x)= l\}\cap B_R\cap J_\delta$ is compact in $\frD^p_T$ for every $l,R,\delta> 0$. \\

\end{lemma}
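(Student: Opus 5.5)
Since $\frD^p_T$ with the Skorohod $J_1$ topology is metrizable, it suffices to show that the set $K_{l,R,\delta}\doteq\{x\in \frD^p_T: I_L(x)=l\}\cap B_R\cap J_\delta$ is sequentially compact. Take any sequence $(x^m)$ in it and extract a subsequence converging in $\frD^p_T$ to a limit that again lies in $K_{l,R,\delta}$.

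The first step is to parametrise the set by finitely many real parameters. Since $I_L(x)=l<\infty$, each coordinate $x_i$ is piecewise constant. The total number of jumps, summed over all coordinates, is exactly $N\doteq l/\alpha$, which is an integer and is fixed for the whole sequence. Each $x^m$ is therefore determined by three pieces of data. The first is its initial value $x^m(0)$. The second is the ordered jump times $0< t^m_1\le \dots\le t^m_N\le T$, where several coordinates may share a time. The third is the jump vectors $\Delta^m_1,\dots,\Delta^m_N\in\RR^p$. Membership in $B_R$ gives $\|x^m(0)\|\le R$ and $\|\Delta^m_k\|\le 2R$. Membership in $J_\delta$ gives that each nonzero coordinate jump has size at least $\delta$. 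All these parameters lie in a compact subset of a finite-dimensional space, so by Bolzano--Weierstrass we may pass to a subsequence along which $x^m(0)\to x(0)$, $t^m_k\to t_k$ and $\Delta^m_k\to\Delta_k$. For each coordinate $i$, the jump sizes in coordinate $i$ either stay at least $\delta$ in absolute value, with constant sign eventually, or equal $0$. After a further subsequence we may fix which coordinates jump at each index $k$.

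Next, define the candidate limit $x(t)=x(0)+\sum_{k: t_k\le t}\Delta_k$. We must check that $x^m\to x$ in $J_1$ and that $x\in K_{l,R,\delta}$. The membership $x\in B_R$ follows from pointwise limits at continuity points together with right-continuity. Jump sizes of at least $\delta$ are preserved by the lower bound above.

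The main obstacle is the possible merging of jump times, that is, $t^m_k\to t$ and $t^m_{k+1}\to t$ for the same $t$, or a jump time tending to $0$ or escaping to $T$. There are three subcases.

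\begin{enumerate}
\item[(a)] \emph{Two jumps in the same coordinate merge.} Then $J_1$ convergence (with the usual $J_1$ conventions near the endpoints) fails, unless the jumps are simultaneous from the outset. Moreover, in the limit the jumps combine into one jump, possibly of size less than $\delta$ or zero. Either way the limit would lose jumps and $I_L$ would drop below $l$.
\item[(b)] \emph{Jumps in different coordinates merge.} In the limit they occur simultaneously, but $I_L$ counts them coordinatewise, so the count $l$ is preserved. They are still two distinct coordinate jumps.
\item[(c)] \emph{A jump time tends to $0$.} This collapses the jump into the initial value and again reduces $I_L$.
\end{enumerate}

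My plan for handling these cases is to first try the Skorohod characterisation. If the limit has the same number of jumps per coordinate, with the jump times converging and jumps in each coordinate separated, then there are time changes $\lambda_m$ mapping $t^m_k\mapsto t_k$ piecewise linearly. These give $\|\lambda_m-\mathrm{id}\|\to0$ and $\sup\|x^m\circ\lambda_m - x\|\to 0$. I will do this coordinatewise, which is legitimate if the topology is taken as the product of one-dimensional $J_1$ topologies, as in \cite{rhee2019sample}. For degenerate merging within a single coordinate, or at time $0$, I expect that the statement as written requires the convention that such sequences do not occur. The natural way to exclude them is to use the $J_\delta$ constraint to show that same-coordinate merging produces no $J_1$ limit in $\frD^1_T$ with $I_L=l$. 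If this fails, I would interpret $J_\delta$ as also imposing a minimal separation between jump times, and verify that this is what is used later in Lemma~\ref{th:weakldp-contprinc}.
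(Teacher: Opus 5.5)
Your proposal stops where the argument breaks, and it cannot be completed, because the lemma is false as stated. Your cases (a) and (c) are not degenerate configurations that a convention can exclude; they are counterexamples.

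Take $p=1$, $l=\alpha$, $0<\delta\le R$. For large $m$ let $x^m(t)=0$ for $t<1/m$ and $x^m(t)=\delta$ for $1/m\le t\le T$. Each $x^m$ lies in the set. Suppose $x^{m_j}\to x$ in $J_1$ via time changes $\lambda_j$. Then $\lambda_j(0)=0$ gives $x(0)=0$, while $\lambda_j(t)\to t$ gives $x(t)=\delta$ for every $t>0$. This contradicts right-continuity of $x$ at $0$, so no subsequence converges.

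Similarly, take $l=2\alpha$, $R\ge 2\delta$, and let $x^m$ jump up by $\delta$ at $T/2$ and again at $T/2+1/m$. The only candidate limit is $2\delta\,\mathbf{1}_{[T/2,T]}$. But for every time change, $x^m\circ\lambda_m$ equals $\delta$ on a nonempty interval, so the $J_1$ distance stays $\ge\delta$.

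Embedding either example in one coordinate gives counterexamples for every $p$. If $\frD^p_T$ carries the usual $J_1$ topology of $\RR^p$-valued paths rather than the product topology, your case (b) fails in the same way.

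The same sequences defeat Billingsley's criterion, which needs a uniform sup-bound and $\lim_{\eta\downarrow 0}\sup_x w'_x(\eta)=0$: they have $w'_{x^m}(\eta)\ge\delta$ whenever $1/m<\eta$. So the paper's one-line ``simple verification'' does not go through either.

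Your first proposed remedy is backwards. $J_\delta$ restricts jump \emph{sizes}, not jump \emph{times}, so merging sequences do belong to the set. Showing that they have no $J_1$ limit with $I_L=l$ is precisely a proof of non-compactness.

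Your second remedy is the right repair. Additionally require distinct jump times to be at least $\eta>0$ apart, and at distance at least $\eta$ from $0$ and $T$. The separation is needed within each coordinate for the product topology, and across coordinates for the usual $J_1$ topology. For that set, your parametrisation, Bolzano--Weierstrass and piecewise-linear time changes do give sequential compactness. Billingsley's criterion also applies there, since $w'_x(\eta')=0$ for $\eta'<\eta$ and the set is closed. But this is a different statement.

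Your instinct to check how the lemma is used in Lemma~\ref{th:weakldp-contprinc} is sound. Paths with closely spaced jumps, or jumps near $0$, are not negligible at a high exponential rate. For example, one jump of size at least $1/M$ during $[0,1/M]$ has probability decaying only like $n^{-\gamma\alpha}$, up to $M$-dependent constants. Such paths must be handled there rather than discarded when bounding $\PP(Y^n\in\calA_M^c)$.
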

\begin{proof} The result follows from a simple verification of  conditions of \cite[Theorem 14.4]{billingsley2013convergence} for every $l,R,\delta>0$.
\end{proof}

\medskip

\begin{corollary}\label{cor:weakldp-infjumps}For $p=1$, the following holds:
	\begin{align*}
	\limsup_{n\to \infty} \frac{1}{\log(n)} {\log \PP\left( L^{n,\gamma} \in \big(\{ I_L=\infty\}\cup_{k\in \NN}\{ I_L\leq k\}\big)^c\right)} =-\infty
	\end{align*}
\end{corollary}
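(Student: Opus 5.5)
The plan is to observe that the event in the statement is, by the definition of $I_L$, essentially empty, and to back this up with a quantitative argument based on Proposition~\ref{prop-heeavyldp}, which also works under a more generous reading of the event.

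\textbf{Step 1: the literal event is empty.} By \eqref{def-rf-L} with $p=1$, $I_L$ takes values in $\{\alpha j: j\in\Z_+\}\cup\{\infty\}$. Take any $\xi\in\frD^1_T$. Either $I_L(\xi)=\infty$, or $I_L(\xi)=\alpha j<\infty$ for some $j\in \Z_+$. In the second case $\xi\in\{I_L\leq k\}$ for any integer $k\geq \alpha j$. Hence
\[
\{I_L=\infty\}\cup\bigcup_{k\in\NN}\{I_L\leq k\}=\frD^1_T ,
\]
so its complement is empty. The probability in the statement is then $0$, and with the convention $\log 0=-\infty$ the limsup equals $-\infty$.

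\textbf{Step 2: a quantitative version.} The statement is presumably meant to be used later as a super-polynomial bound for events of the form ``$L^{n,\gamma}$ lies in a set $A$ avoiding all finite-jump configurations.'' To cover that use, I would also prove the following. Let $A\subset\frD^1_T$ be measurable with $A\cap\calD_{j,k}=\emptyset$ for all $j,k\in\Z_+$, and suppose $A$ is bounded away from $\{I_L\leq \alpha(l+m)\}$ for every $l,m\geq1$. Then
\[
\limsup_{n\to\infty}\frac{1}{\log(n)}\log\PP(L^{n,\gamma}\in A)=-\infty .
\]

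\textbf{Step 3: proof of the quantitative version.} Fix $M>0$ and choose $l=m\geq1$ with $\alpha(l+m)>M$. Proposition~\ref{prop-heeavyldp} gives $\PP(L^{n,\gamma}\in A)\leq n^{-\alpha(l+m)}$ for all $n$ large enough. Taking logarithms and dividing by $\log(n)$,
\[
\limsup_{n\to\infty}\frac{1}{\log(n)}\log\PP(L^{n,\gamma}\in A)\leq -\alpha(l+m)<-M .
\]
Since $M$ is arbitrary, the limsup is $-\infty$.

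\textbf{Main obstacle.} The only delicate point is checking the hypotheses of Proposition~\ref{prop-heeavyldp}, in particular that $A$ is bounded away from $\{I_L\leq\alpha(l+m)\}$ \emph{uniformly} in $l,m$. For the literal set in the statement this is vacuous, because $A=\emptyset$. For a general $A$ it is exactly the assumption one must carry into the later application (Lemma~\ref{th:weakldp-contprinc}). There I would verify it using the sup-norm ball $B_R$ and the jump-size threshold $J_\delta$ from Lemma~\ref{levelset-closedball-compact}.
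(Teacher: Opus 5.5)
Your Step 1 is correct and is essentially the paper's argument. The paper asserts that the law of $L^{n,\gamma}$ is supported on $\{I_L=\infty\}\cup\bigcup_{k\in\NN}\{I_L\leq k\}$, appealing to the pure-jump nature of $L^{n,\gamma}$; you observe more directly that this set is all of $\frD^1_T$, so the event is empty and the bound holds for any law whatsoever. Steps 2--3 are not needed, and Step 3 only uses separation from $\{I_L\leq\alpha(l+m)\}$ for each fixed $l,m$ --- separation uniform in $l,m$, as your ``main obstacle'' paragraph asks for, would force $A=\emptyset$, since step functions with finitely many jumps are dense in $\frD^1_T$.
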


\begin{proof}
	Since $L^{n,\gamma}$ is a pure jump process, the  probability $\PP\circ(L^{n,\gamma})^{-1}$ is supported only on  the set of piecewise constant functions in $\frD^p_T$ with finite or infinitely many jumps. In other words, the
	support of  $\PP\circ(L^{n,\gamma})^{-1}$ is 
	$$ \{ I_L=\infty\}\cup_{k\in \NN} \{ I_L\leq k\}.$$
This immediately gives us the result.
\end{proof}

\medskip

\begin{remark} \label{rem-multi}For $p>1$, since $L^{n,\gamma}_i$ and $L^{n,\gamma}_j$, for $1\leq i< j\leq p$ are i.i.d., we can also conclude the multidimensional versions of Proposition~\ref{prop-heeavyldp} and Corollary~\ref{cor:weakldp-infjumps}.
\end{remark}

\medskip

We now proceed to prove the main result of this section.
Suppose $\{\beta^n:n\in \NN\}$ is a family of $\RR^p$--valued random variables. For $T>0$, define a $\calY\doteq \RR^p\times \frC_T^p\times \frD^p_T$--valued random variable
$$ Y^n \doteq \left(\beta^n, W^n, L^{n,\gamma}\right).$$
Here $\beta^n, W^n $ and $L^{n,\gamma}$ are independent for every $n$. Recall that $\gamma>0$.
  Let $\calZ$ be a Polish space and $G:\calY\rightarrow \calZ$ be a continuous map.  Also, let $Z^n=G(Y^n)$ be a $\calZ$--valued random variable for every $n$. 
\begin{lemma}\label{th:weakldp-contprinc} Suppose the family $\{\beta^n:n\in \NN\}$ of $\mathbb{R}^p$--valued random variables  satisfies LDP with rate $\log(n)$ and rate function $I_\beta$. Then, $\{Z^n:n\in \NN\}$ satisfies  WLDP with rate $\log(n)$ and rate function $I_Z:\calZ\rightarrow [0,\infty]$ given by 
	$$ I_Z(z)\doteq \inf_{y\in \calY:z  =G(y)}I_Y(y),$$
	where
	$$ I_Y(y)\doteq I_Y(y_1,y_2,y_3)\doteq I_\beta(y_1)+ I_W(y_2)+ \gamma I_L(y_3)\,.$$
	
\end{lemma}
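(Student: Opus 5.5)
We will argue in two stages: first establish a WLDP for the triple $Y^n=(\beta^n,W^n,L^{n,\gamma})$ on $\calY$ with rate function $I_Y$, and then push it forward through the continuous map $G$ by hand. The push-forward step replaces the contraction principle, which is unavailable for a WLDP.

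\textbf{Stage 1: WLDP for $Y^n$.} The lower bound for open sets is handled first. It suffices to treat basic open rectangles $U_1\times U_2\times U_3$ around a point $y$ with $I_Y(y)<\infty$. Independence gives
$$\PP(Y^n\in U_1\times U_2\times U_3)=\prod_{i}\PP(Y^n_i\in U_i).$$
We then apply the LDP lower bounds for $\beta^n$ and for $W^n$ (Proposition~\ref{prop-w-ldp}), together with the WLDP lower bound for $L^{n,\gamma}$ (Corollary~\ref{cor-L}). Since every open set is a union of such rectangles, this yields $\liminf \frac{1}{\log n}\log\PP(Y^n\in U)\ge -\inf_U I_Y$.

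For the upper bound on a compact $K\subset\calY$, we cover $K$ by finitely many small closed rectangles $\overline{B}_1\times\overline{B}_2\times\overline{B}_3$ whose third factor is a compact neighbourhood. We bound each rectangle by the product of the three marginal upper bounds; this is legitimate because closed sets work for the LDPs and compact sets work for the WLDP. Lower semicontinuity of $I_Y$ then lets the neighbourhoods shrink, in the standard way of \cite[Lemma 2.8]{lynch1987large}. This is essentially the product argument already used in Proposition~\ref{prop-L-wldp}.

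\textbf{Stage 2: transfer to $Z^n$, lower bound.} For open $U\subset\calZ$, the preimage $G^{-1}(U)$ is open by continuity. Hence
$$\liminf\frac{1}{\log n}\log\PP(Z^n\in U)\ge-\inf_{G^{-1}(U)}I_Y=-\inf_U I_Z.$$
No further work is needed here.

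\textbf{Stage 2: transfer to $Z^n$, upper bound (the main obstacle).} For compact $C\subset\calZ$, the preimage $G^{-1}(C)$ is closed but not compact, so Stage 1 does not apply directly. Our plan is to fix a level $M$ and split according to the behaviour of $L^{n,\gamma}$. Choose $R,\delta>0$ and $k$ with $\gamma\alpha(k+1)>M$. Then
$$\PP(Z^n\in C)\le \PP\big(Y^n\in G^{-1}(C)\cap(\RR^p\times\frC^p_T\times \calK)\big)+\PP(L^{n,\gamma}\notin \calK),$$
where $\calK=\{I_L\le \gamma\alpha k\}\cap B_R\cap J_\delta$. We estimate the two terms separately.

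\emph{First term.} The set $\calK$ is compact by Lemma~\ref{levelset-closedball-compact}, taking a finite union over the levels. Intersecting with the compact sublevel sets of $I_\beta$ and $I_W$, and using exponential tightness of the Gaussian part and of $\beta^n$ (both of which satisfy full LDPs with good rate functions on Polish spaces), we reduce to a compact subset of $G^{-1}(C)$. On that compact set the Stage 1 upper bound applies and gives the rate $-\inf_{G^{-1}(C)}I_Y=-\inf_C I_Z$, up to an error of at most $M$ coming from the tightness remainders.

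\emph{Second term.} We need $\PP(L^{n,\gamma}\notin\calK)$ to be super-exponentially small at level $M$. This breaks into three pieces:
\begin{itemize}
\item paths with more than $k$ jumps, or with infinitely many jumps at scale $\delta$, are controlled by Proposition~\ref{prop-heeavyldp} and Corollary~\ref{cor:weakldp-infjumps} (with Remark~\ref{rem-multi} for $p>1$), giving decay of order $n^{-\alpha(k+1)\gamma}$;
\item $\sup\|L^{n,\gamma}\|>R$ is handled by the stable tail $\PP(\sup_{t\le T}|L^\alpha(t)|>Rn^\gamma)\lesssim R^{-\alpha}n^{-\gamma\alpha}$, combined with counting jumps;
\item small jumps (size below $\delta$) cannot be discarded outright, since $L^{n,\gamma}$ has infinitely many of them.
\end{itemize}

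The small jumps are the step we expect to be hardest. The plan is to decompose $L^{n,\gamma}=L^{n,\gamma}_{\ge\delta}+L^{n,\gamma}_{<\delta}$ and to show that $\sup_t\|L^{n,\gamma}_{<\delta}\|$ is exponentially negligible at every rate, by a Bernstein-type martingale bound (the compensated small-jump part has finite exponential moments). We then absorb this piece into $W^n$-type perturbations using the uniform continuity of $G$ on compacts (via Lemma~\ref{lem-cont-map}), at the cost of enlarging $C$ to a closed $\varepsilon$-neighbourhood. Finally we let $\varepsilon\downarrow0$ using lower semicontinuity of $I_Z$ on $C$, and let $M\to\infty$.
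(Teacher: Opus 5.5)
\textbf{Verdict: the proposal has a genuine gap in the compact-set upper bound. The gap cannot be closed using only continuity of $G$.}

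Stage~1 and the open-set lower bound are fine and match the paper. For Stage~1, cite \cite[Lemma~2.8]{lynch1987large} as the paper does; note that $\frD^p_T$ has no compact neighbourhoods, so your rectangle covering does not work as written.

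\textbf{Where the upper bound fails.} The problem is the second term. You need a compact $\calK\subset\frD^p_T$ with $\PP(L^{n,\gamma}_{\ge\delta}\notin\calK)\le n^{-M+o(1)}$ for arbitrarily large $M$. That is exponential tightness of the large-jump part at speed $\log n$, and it fails for every $M>\gamma\alpha$.
\begin{itemize}
\item A compact subset of $\frD^p_T$ is bounded in sup-norm, say by $R$. A single jump of size $>2R$ already has probability $\asymp R^{-\alpha}n^{-\gamma\alpha}$. Your own second bullet gives exactly this rate $\gamma\alpha$, and counting jumps cannot improve it, because one jump suffices.
\item A unit jump at a time in $(0,\epsilon)$ has probability $\asymp\epsilon n^{-\gamma\alpha}$, and such paths leave every compact set. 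For example, $\mathbf{1}_{[1/k,T]}$ has no Skorokhod limit. So Lemma~\ref{levelset-closedball-compact}, which you use for the compactness of $\calK$, is false as stated.
\item Abstractly: exponential tightness together with the WLDP would make $\gamma I_L$ a good rate function \cite[Lemma~1.2.18]{dembo2009large}. But $\{I_L\le\alpha\}$ contains all one-jump step functions and is unbounded.
\end{itemize}
So your second term decays only at rate about $\gamma\alpha$, which is useless once $\inf_C I_Z>\gamma\alpha$.

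\textbf{Comparison with the paper.} The paper's proof has the same architecture and the same defect. It sits in \eqref{eq-h-infty}--\eqref{eq-supp} and in Lemma~\ref{levelset-closedball-compact}. In particular, $L^{n,\gamma}$ has infinitely many jumps, so $I_L(L^{n,\gamma})=\infty$ almost surely and $\PP(Y^n\in\calH_\infty)=1$. The paper therefore does not supply the missing idea. Your split $L^{n,\gamma}_{\ge\delta}+L^{n,\gamma}_{<\delta}$ at least confronts the small jumps, which the paper ignores.

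\textbf{The statement is false for general continuous $G$.} Take $p=1$, $\calZ=\RR$, $K=\{0\}$ and $G(y)=d(y_3,F)$, where $d$ is a Skorokhod metric and $F=\bigcup_{j\ge j_0}F_j$. Here $F_j$ is the closed $\frac1{4j}$-neighbourhood of the step functions $c_1\mathbf{1}_{[s,T]}+c_2\mathbf{1}_{[u,T]}$ with $0<s\le\frac1j$, $\frac12\le c_1\le2$, $\frac T2\le u\le\frac{3T}4$ and $\frac1j\le c_2\le\frac2j$.
\begin{itemize}
\item $F$ is closed. A limit $\xi$ of $\xi_k\in F_{j_k}$ with $j_k\to\infty$ would have $\xi(0)=0$ but $|\xi|\ge\frac12$ on $(0,\frac T4]$, contradicting right-continuity at $0$.
\item Every step function in $F_j$ takes values within $\frac1{4j}$ of $0$, $c_1$ and $c_1+c_2$. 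These ranges are disjoint since $c_2\ge\frac1j$, so the function has at least two jumps. Hence $\inf_K I_Z=2\gamma\alpha$.
\item Take $j\asymp n^{\gamma}$. Then the second jump has order-one size for $L^\alpha$, and the remaining fluctuation of $L^{n,\gamma}$ is $O(n^{-\gamma})$.
\item So $\PP(Z^n\in K)=\PP(L^{n,\gamma}\in F)\gtrsim n^{-\gamma\alpha}\cdot n^{-\gamma}$. This is the price of one jump of $L^\alpha$ of size $\asymp n^{\gamma}$ in a time window of length $\asymp n^{-\gamma}$.
\end{itemize}
Since $\gamma(\alpha+1)<2\gamma\alpha$, the claimed upper bound fails. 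Any argument for Proposition~\ref{prop-finite} must therefore use the structure of the specific map $y\mapsto F_T(y)(T)$, not just continuity of $G$. For instance, that map's output depends continuously on jump sizes and times even when jumps shrink, merge, or move to the endpoints.
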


\medskip

\begin{remark} In the proof of the above lemma,  the structure of $\R^p$ does not play a role.  The result holds even if  $\R^p$ is replaced by a Polish space.
\end{remark}

\medskip

\begin{proof} From \cite[Lemma 2.8]{lynch1987large}, $\{Y^n: n\in \NN\}$ satisfies WLDP with rate $\log(n)$ and rate function $I_Y$.
For any open set $U\subset \calZ$, $G^{-1} U\subset \calY$ is also open, due to continuity of $G$. From the definition of WLDP, we have
\begin{align*} \liminf_{n\to\infty} \frac{1}{\log(n)} {\log \PP(Z^n \in U)} & = \liminf_{n\to\infty} \frac{1}{\log(n)}{\log \PP(Y^n \in G^{-1}U)}\\
  &\geq -\inf_{y\in G^{-1}U}I_Y(y)\\
  &= -\inf_{z\in U}I_Z(z)\,. \end{align*}
The second equality above follows from the definition of $I_Z$. It only remains to show that for every compact set $K\subset \calZ$, 
	\begin{align}\label{eq-des-ub} \limsup_{n\to\infty} \frac{1}{\log(n)}\log\PP(Z^n\in K) \leq -\inf_{z\in K} I_Z(z)\,.\end{align}
	Fix a compact set $K\subset \calZ$ and define $ \calK\doteq G^{-1}K$ which is not necessarily compact.
To keep the expressions concise, we introduce some additional notation. For $M>0$, define
	\begin{align*}
	\calA_{M}&\doteq  \{y_1\in \RR^p: I_\beta(y_1)\leq M\}\times \{y_2\in \frC_T^p: I_W(y_2)\leq M\}\times \calG_{M},
	\end{align*}
where $\calG_{M}=\{x\in \frD^p_T :  I_L(x)= M\}\cap B_M\cap J_{M^{-1}}$. That is $\calG_M$ is the set of piecewise constant functions in $\frD^p_T$ that are confined to the closed ball of radius $M$ centred at the origin, with exactly $M$ jumps of size at least $M^{-1}$. From Lemma~\ref{levelset-closedball-compact}, $\calG_M$ is compact in $\frD^p_T$. Furthermore, using the compactness of level sets of $I_\beta$ and $I_W$, we obtain the compactness of $\calA_M$ in $\calY$. Now consider
	\begin{align*}
	\limsup_{n\to\infty} \frac{1}{\log(n)}{\log\PP\left(Z^n \in K\right)}= \limsup_{n\to\infty} \frac{1}{\log(n)}{\log\PP\left(Y^n \in \calK\right)}.
	\end{align*}
	We bound
	$ \limsup_{n\to\infty} \frac{1}{\log(n)} {\log \PP\left(Y^n \in \calK\cap \calA_{M}\right)}$ and
	$ \limsup_{n\to\infty} \frac{1}{\log(n)} {\log \PP\left(Y^n \in \calK\cap \calA_{M}^c\right)}$
	separately.
	 Using WLDP of $\{Y^n:n\in \NN\}$ and the compactness of $\calK\cap \calA_M$ (note that $\calK$ is a closed set), we have
	\begin{align}
	\limsup_{n\to\infty} \frac{1}{\log(n)} {\log \PP\left(Y^n \in \calK\cap \calA_{M}\right)}\leq -\inf_{y\in \calK\cap \calA_M} I_Y(y)\leq -\inf_{y\in \calK} I_Y(y)= -\inf_{z\in K } I_Z(z)\,.
	\end{align}
	Again, to get the equality we use the definition of $I_Z$. For $k\in \NN$, let  $$\calH_k\doteq  \{y_1\in \RR^p: I_\beta(y_1)>M\}\times \{y_2\in \frC_T^p: I_W(y_2)>M\}\times \calG_{M+k} \,.$$
	Then we have for every $k\in\NN$,
	\begin{align*}
	\limsup_{n\to\infty} \frac{1}{\log(n)} {\log \PP\left(Y^n \in {\calH_k}\right)}&\le \limsup_{n\to\infty} \frac{1}{\log(n)} {\log \PP\left(\beta^n: I_\beta(\beta^n)>M\right)}\\
	&\quad+\limsup_{n\to\infty} \frac{1}{\log(n)} {\log \PP\left(W^n: I_W(W^n)>M \right)}\\
	&\quad+\limsup_{n\to\infty} \frac{1}{\log(n)} {\log \PP\left(L^{n,\gamma}\in \calG_{M+k} \right)} \\
	&\leq \limsup_{n\to\infty} \frac{1}{\log(n)} {\log \PP\left(L^{n,\gamma}\in \calG_{M+k} \right)} \,.
	\end{align*}
	In the above, we use the fact that $\beta^n$, $W^n$ and $L^{n,\gamma}$ are independent for every $n\in\NN$ and $\gamma>0$.
	To get the last line, we use  the LDP of $\{(\beta^n,W^n): n\in \NN\}$ to infer that  
	$$\limsup_{n\to\infty} \frac{1}{\log(n)} {\log \PP\Big(\beta^n: I_\beta(\beta^n)>M\Big)}\leq 0, \  \limsup_{n\to\infty} \frac{1}{\log(n)} {\log \PP\Big(W^n: I_W(W^n)>M \Big)}\leq 0\,.$$
	Since $\calG_{M+k}$ is compact in $\frD^p_T$, we further get
	\begin{align}\label{eq-h-k}
	\limsup_{n\to\infty} \frac{1}{\log(n)} {\log \PP\left(Y^n \in {\calH_k}\right)}\leq -\inf_{y\in \calG_{M+k}}  I_L(y)= -M-k\,.
	\end{align}
		Also, for $\calH_\infty\doteq \RR^p \times \frC^p_T\times \{y\in \frD^p_T: I_L(y)=\infty\},$ using Proposition~\ref{prop-heeavyldp} along with Remark~\ref{rem-multi}, 	  we have
	\begin{align}\label{eq-h-infty}
	\limsup_{n\to\infty} \frac{1}{\log(n)} {\log \PP\left(Y^n \in {\calH_\infty}\right)}=-\infty\,. 
	\end{align} 
	From the above definitions of $\calH_k$ and $\calH_\infty$, it is clear that 
		\begin{align}\label{eq-dis-union}
	\calA_{M}^c\subset \big(\RR^p \times \frC^p_T\times Q\big)\cup \calH_\infty\cup\big( \cup_{k=1}^\infty {\calH}_{k}\big)\,.
	\end{align} 	
Here, $Q$ is the set of all functions $\xi$ such that $\xi_i$ is not piecewise constant for at least one $1\leq i\leq p$.  From Corollary~\ref{cor:weakldp-infjumps} and the fact that $L_i^{n,\gamma}$ and $L_j^{n,\gamma}$ are pairwise independent for $1\leq i\neq j\leq p$, we have 
\begin{align}\label{eq-supp}
\limsup_{n\to\infty} \frac{1}{\log(n)} {\log \PP\Big(Y^n \in \big(\RR^p \times \frC^p_T\times Q\big)\Big)}=-\infty\,.
\end{align}
	Therefore, 
	\begin{align*}
	\limsup_{n\to\infty}& \frac{1}{\log(n)} {\log \PP\Big(Y^n \in (\calK \cap\calA^c_{M})\Big)}\\
	&\leq \limsup_{n\to\infty} \frac{1}{\log(n)} {\log \PP\Big(Y^n \in \calA^c_{M}\Big)}\\
	&\leq 
	\max\bigg\{\sup_{k\in \NN}\Big\{ \limsup_{n\to\infty} \frac{1}{\log(n)} {\log \PP\Big(Y^n \in  \calH_{k}\Big)}\Big\}, \limsup_{n\to\infty} \frac{1}{\log(n)} {\log \PP\Big(Y^n \in Q\Big)},\\
	&\qquad \limsup_{n\to\infty} \frac{1}{\log(n)} {\log \PP\Big(Y^n \in \calH_{\infty}\Big)}\bigg\}\\
	&\leq -M\,.
	\end{align*}
	In the above, the second inequality follows from~\eqref{eq-dis-union} and the third inequality follows from~\eqref{eq-h-k},~\eqref{eq-h-infty} and~\eqref{eq-supp}. To summarize, we have
	\begin{align*}
	\limsup_{n\to\infty} \frac{1}{\log(n)} {\log \PP\left(Z^n \in K\right)}=\limsup_{n \to\infty} \frac{1}{\log(n)} {\log \PP\left(Y^n \in \calK\right)}\leq \max\left\{-\inf_{z\in K} I_Z(z),-M \right\}.
\end{align*}
Finally, taking $M\uparrow \infty$ proves the result.	
\end{proof}

\begin{proof}[Proof of Proposition~\ref{prop-finite}]From Lemma~\ref{lem-cont-map} and Lemma~\ref{th:weakldp-contprinc} it is immediate that  for $T>0$, that $\{X^{n,\gamma}(T):n\in\NN\}$ satisfies a WLDP with rate $\log(n)$  and rate function  $V_{\gamma,T}$ given by~\eqref{eq:hjb-mod-rep}.
\end{proof}

\section{Proof of Theorem~\ref{thm-main}}\label{s:proof}

We will now prove that $V_{\gamma,T}$ converges as $T\to\infty$ and identify the limit.  We achieve this by first reversing the time variable $t \to -t$, and use dynamics prescribed in \eqref{ctdynR}. We may rewrite \eqref{eq:hjb-mod-rep} therefore as
\begin{align}\label{eq-hjb}
V_{\gamma,T}(x)= \inf_{ \stackrel{(u,v) \in \calR_x(z,T)} {z\in \RR^p}} \Big( \widetilde V(z) +\frac{1}{2}\int_0^T \|u(s)\|^2 \D s +\gamma I_L(v)\Big).
\end{align} 

Our first result shows that we have uniform bounded and  pointwise convergence.

\begin{proposition}\label{prop-Vexp} The following hold:
\begin{enumerate}
\item [(i)] \begin{equation} \label{eq:ubdd} \sup_{T>0}\sup_{x\in \RR^p} V_{\gamma,T}(x)\leq p\gamma \alpha\,.\end{equation}
\item [(ii)] For $\gamma >0$,  $V_{\gamma,T} \rightarrow V_\gamma$   pointwise, where  $V_\gamma$ given by~\eqref{eq:V_exp}. 
\end{enumerate}
\end{proposition}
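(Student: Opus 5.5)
For (i), I will exhibit an explicit admissible control in $\calR_x(z,T)$ for each $x$ and $T$, and bound its cost. Take $u\equiv 0$ and choose $z=0$, so that $\widetilde V(z)=\widetilde V(0)=0$ by Assumption~\ref{main-assump}(i). The reversed dynamics \eqref{ctdynR} start at $Y(0)=x$. I apply, at time $0$, $p$ impulses, the $i$-th of which changes only the $i$-th coordinate. These move $Y$ from $x$ to $0$ one coordinate at a time; they can be placed at distinct times $0=t_1<\dots<t_p$ that are arbitrarily close to $0$, or I can use the convention in \eqref{ctdynR1}.

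The point $0$ is an equilibrium of $\dot Y=-b(Y)$, since $b(0)=0$. So with $u\equiv0$ the path stays at $0$ up to time $T$, giving $Y(T)=0=z$. Each impulse changes one coordinate by one jump, so $I_L(v)=p\alpha$ (or at most $p\alpha$ if some coordinate of $x$ is already zero). Hence $V_{\gamma,T}(x)\le p\gamma\alpha$, uniformly in $T$ and $x$.

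If the impulses must be placed at strictly positive, distinct times, I will shrink the gaps: between impulses the drift $-b$ moves $Y$ only by $O(\epsilon)$. The last impulse can then correct all of the accumulated error in its own coordinate, but not in the others. To handle this cleanly, I will apply all $p$ impulses at the final time $T$ in reverse order, landing exactly at $z$ after first letting $Y$ flow freely. Equivalently, I choose $z$ to be the point reached by $p$ coordinate jumps from $Y(T-)$. The only cost of this is $\widetilde V(z)$, which need not be $0$. So the cleanest route is to make the jumps simultaneous at time $0$, which I expect the definition permits.

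For (ii), I will prove the two inequalities $\limsup_T V_{\gamma,T}\le V_\gamma$ and $\liminf_T V_{\gamma,T}\ge V_\gamma$ separately.

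\emph{Upper bound.} Fix $\epsilon>0$ and an $\epsilon$-optimal $(u,v)\in\calR_x$ for $V_\gamma(x)$. This control has finitely many impulses, and its trajectory satisfies $Y(t)\to0$. Restrict it to $[0,T]$ and take $z=Y(T)$. The resulting cost is at most $\widetilde V(Y(T))+V_\gamma(x)+\epsilon$. I need $\widetilde V(Y(T))\to0$, which requires continuity of $\widetilde V$ at $0$. Convexity of $\widetilde V$ together with $\widetilde V(0)=0$ gives this if $\widetilde V$ is finite near $0$. If it is not, I instead append a short continuous control that steers $Y(T)$ to $0$. Its cost is $O(\|Y(T)\|^2/\tau)$ over a time $\tau$, which is small if I choose $\tau\sim\|Y(T)\|$. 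This is a local controllability estimate, presumably the content of the appendix lemma.

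\emph{Lower bound.} Take a near-optimal $(u_T,v_T,z_T)$ for $V_{\gamma,T}(x)$. By (i), the total cost is bounded by $p\gamma\alpha+1$, so the number of impulses is uniformly bounded and $\widetilde V(z_T)$ is bounded. I will extend the control beyond $T$ by running the reversed dynamics from $z_T$ back to $0$ at an extra cost of at most $\widetilde V(z_T)$ plus $o(1)$. This uses the forward-stability of $0$: the time-reversed controlled path from $z$ to $0$ corresponds to the forward path $\dot\phi=b(\phi)$ from... Care is needed with the direction here. In the reversed system, reaching $0$ from $z$ requires control. Since $\widetilde V$ is strictly convex and $\ge$ the quasi-potential cost to reach $z$ with minimum at $0$, I would need $\widetilde V(z)\ge$ the cost to connect $z$ to $0$ under \eqref{ctdynR}. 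That is not given in general.

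So instead I plan to use dynamic programming monotonicity. I will show that $T\mapsto V_{\gamma,T}$ is eventually bounded below by $V_\gamma-\epsilon$ by splitting the horizon: the controlled path must pass close to $0$, because otherwise the uncontrolled forward flow drifts it. This yields an admissible element of $\calR_x$ with comparable cost.

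I expect this lower bound, which removes the dependence on the initial cost $\widetilde V$, to be the main obstacle. I would first try a compactness argument: take limits of the bounded-cost controls, where the $u_T$ are weakly compact in $L^2$ and the impulse times and sizes lie in a finite-dimensional bounded set. Combined with global asymptotic stability, this should show that $Y_T$ reaches a neighbourhood of $0$ at uniformly bounded cost.
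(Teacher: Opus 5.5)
Your part (i) is the paper's construction: the paper lets $Y$ flow for a short time and then jumps to $0$. You hope that simultaneous coordinate jumps are allowed, but Definition~\ref{def:imp}(ii) lets each jump of $v$ change only one coordinate, and the paper's $v^*$ has the same defect. The fix is easy: place the $p$ coordinate jumps at times $h,2h,\dots,ph$ and freeze the state in between with $u=b(Y)$, at extra cost $O(h)$.

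Your upper bound $\limsup_{T\to\infty}V_{\gamma,T}(x)\le V_\gamma(x)$ is correct. Run a $\delta$-optimal element of $\calR_x$ up to time $T-1$; it has finitely many jumps. Then move along the straight segment from $Y(T-1)$ to $0$. This costs at most $(1+C)^2\|Y(T-1)\|^2\to0$, with $C$ the Lipschitz constant of $b$, and it ends where $\widetilde V=0$.

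The genuine gap is the lower bound $\liminf_{T\to\infty}V_{\gamma,T}(x)\ge V_\gamma(x)$, for which you offer only a plan. The plan rests on the claim that a bounded-cost path of \eqref{ctdynR} over a long horizon must come close to $0$, ``because otherwise the flow drifts it.'' For \eqref{ctdynR} the origin is a \emph{repeller}, so the drift carries the path outward at no cost. Nothing in Assumption~\ref{main-assump} makes the return to a distant endpoint $z$ expensive. The only price may then be the terminal cost $\widetilde V(z)$, which can be small.

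Here is a concrete counterexample. Let $p=1$, $b(y)=-y/(1+|y|^3)$ (globally Lipschitz, origin globally asymptotically stable), and $\widetilde V(z)=\varepsilon z^2$ (Gaussian initial laws with variance $(2\varepsilon\log n)^{-1}$).
\begin{itemize}
\item The quasi-potential $I(y)=2\int_0^{|y|}\frac{s}{1+s^3}\,\D s$ is bounded.
\item $V_\gamma(x)=\min\{I(x),\gamma\alpha\}$: a jump-free path costs at least $I(x)$ by the usual gradient identity, and a path with a jump costs at least $\gamma\alpha$.
\item Fix $x>z\doteq\varepsilon^{-1/3}$. For large $T$, let $Y$ follow $\dot Y=-b(Y)$ freely from $x$ out to a radius $\rho_T$. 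Then use $u=2b(Y)$, i.e.\ $\dot Y=b(Y)$, to come back down to $z$, choosing $\rho_T\to\infty$ so that the total time is $T$.
\item This path is admissible for $V_{\gamma,T}(x)$, with cost $\varepsilon z^2+I(\rho_T)-I(z)\le\varepsilon z^2+2/z=3\varepsilon^{1/3}$.
\end{itemize}
Hence $\limsup_{T}V_{\gamma,T}(x)\le 3\varepsilon^{1/3}$, while $V_\gamma(x)\ge\min\{I(1),\gamma\alpha\}>0$ for $\varepsilon\le1$.

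So the lower bound, and with it (ii) as stated, fails unless an extra hypothesis is added, for instance one forcing the quasi-potential to blow up at infinity. Any proof of that direction must use such a hypothesis; no compactness argument alone can close your plan.

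For comparison, the paper argues this direction in two steps. Step 2 extracts a single control on $\RR_+$ whose restrictions are optimal for every horizon. Step 3 then uses Lemma~\ref{lem:fleming-mod} to force the limiting endpoint $z^*$ to be $0$. That route also fails on the same example. Such a control would have energy at most $3\varepsilon^{1/3}$, no jump, and bounded trajectory, since $\widetilde V(Y^*(T))\le 3\varepsilon^{1/3}$ for all large $T$. By the annulus estimate it would then approach $0$, and so cost about $I(x)>3\varepsilon^{1/3}$, a contradiction.
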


\begin{proof}  {  Fix $\epsilon>0$ and $x\in \RR^p$. Consider the trajectory defined as follows: $Y(0)=x$ and $Y(T)=z$
\begin{align*} \frac{\D }{\D t}{ Y}(t)&=\begin{cases} -b( Y(t)) , &\text{ for $0\leq t\leq \epsilon $}\\
 0, &\text{ for $t>\epsilon$}
\end{cases}\\
 Y(\epsilon+)&= 0\,.
	\end{align*}
	Note that this trajectory corresponds to triplet $(z,u,v) \equiv (0, 0, v^*)$ with 
	\begin{align*}
	v^*(t) =\begin{cases} 0, &\text{ for $0\leq t<\epsilon$,}\\
	-\phi (\epsilon,x), &\text{ for $t\geq \epsilon$.} 
	\end{cases}
	\end{align*}
Therefore,  triplet $(z,u,v) = (0,0,v^*)$ is suboptimal for $V_{\gamma,T}$ and the associated cost for this triplet is bounded from above $p\gamma \alpha$, as $v^*$ can have at most $p$ non-zero components. This gives us $ V_{\gamma,T}(x)\leq p\gamma \alpha $ and proves part (i). }\\

The proof of part (ii) is divided into three steps.

\medskip

\noindent {\bf (Step 1) For $T>0$, there exists a  nearly optimal $(z,u,v)$ triplet such that it has  {at most $p$ one-dimensional  impulses}: } \\

In the following, we show that  whenever $V_{\gamma,T}(x)<\infty$, there exists a triplet $(z^{*,\delta},u^{*,\delta},v^{*,\delta})$ that is $\delta$-optimal for  the infimum~\eqref{eq-hjb}, such that {$I_L(v^{*,\delta})\in \{\alpha,\ldots, p\alpha\}$}. To that end, let $(z^\delta,u^\delta,v^\delta)$ be a $\delta$--optimal solution for the infimum in~\eqref{eq-hjb}. This implies that 
	 \begin{equation*}
	 V_{\gamma,T}(x)\geq \widetilde V(z^\delta )+ I_W(u^\delta) + \gamma I_L(v^\delta)-\delta \,. \label{deltabound}
	 \end{equation*}
	 Since $0\leq V_{\gamma,T}(x)<\infty$, we have $I_W(u^\delta)+ \gamma I_L(v^\delta)<\infty$ for every $T>0$.	 In particular, $K\doteq \gamma I_L(v^\delta)<\infty $ (assume {$K>p \gamma \alpha$}, else we are done: recall the definition of $I_L$ from~\eqref{def-IL}; also, note that $\frac{K}{\gamma \alpha}$ is an integer). This means that there are two  $\bar K\doteq \frac{K}{\gamma\alpha}$--dimensional vectors $\{t_j\}_{j=1}^{\bar K}$ and $\{\theta_j\}_{j=1}^{\bar K}$ such that  $v^{\delta}$ has jumps at instants $t= t_j$ with jump size $\theta_j$. 
Furthermore, there is a path $Y(\cdot)$ given by 
	    $Y(0)= x$. 
	    \begin{align*} \dot{Y}(t)&= -b(Y(t)) + u^\delta(t), \text{ for $t_{i-1}\leq t\leq t_i$}\\
	    Y(t_i+) &= v^\delta(t_i)-v^\delta(t_i-) = \theta_i, \ Y(T)= z^\delta\,.
	    \end{align*}
	    Recall that $v^\delta$ is a piecewise constant $p$-dimensional vector because $\bar K<\infty$. Define, $t^*= \min_{1\leq j
	    \leq \bar K}t_j$. We  construct $(z^{*,\delta},u^{*,\delta},v^{*,\delta})$ that is also $\delta$--optimal such that  {$I_L(v^{*,\delta})\in \{\alpha,\ldots,p\alpha\}$}. Let $Y^*(\cdot)$ denote the corresponding $Y(\cdot)$.  Define 
	 \begin{align*}
	 Y^*(0)&\doteq x,\, Y^{*}(T)=z^{*,\delta}\doteq z^\delta\\
	u^{*,\delta}(t)& \doteq \begin{cases}0, &\text{for  $0\leq t\leq t^*$}\,.\\	
u^{\delta}(t), &\text{for $t^*< t\leq T$\,.}
	\end{cases}
	 \end{align*}
	    Define $v^{*,\delta}$ by: 
	    \begin{align*}v^{*,\delta}(t)\doteq \begin{cases} 0, & \text{for $0\leq t\leq t^*$,}\\
	    Y^*(t^*)-Y^*(t^*+), &\text{for $t^*< t\leq T$.}
	    \end{cases}
	 \end{align*}
	 Noting that $z^{*,\delta}=z^\delta$, we now compare the associated costs of $(z^\delta, u^\delta, v^\delta)$ and $(z^{*,\delta}, u^{*,\delta}, v^{*,\delta})$. Thus
	 \begin{align*}
	 &\widetilde V(z^\delta) +\frac{1}{2}\int_0^T \|u^\delta(s)\|^2 \D s + \gamma I_L(v^\delta)- \widetilde V(z^{*,\delta}) -\frac{1}{2}\int_0^T \|u^{*,\delta}(s)\|^2 \D s - \gamma I_L(v^{*,\delta})\\
	 &\qquad= \frac{1}{2}\int_{0}^{t^*} \|u^\delta(s)\|^2 \D s + K  - {p\gamma \alpha} \ \text{ by the choice of $(z^{*,\delta},u^{*,\delta}, v^{*, \delta})$}\\
	 &\qquad > \frac{1}{2}\int_{0}^{t^*} \|u^\delta(s)\|^2 \D s, \text{ since {$K>p \gamma \alpha$}.}
	 \end{align*}
{Note that even though we used the impulse once, the cost incurred due to this can be greater than $\gamma\alpha$ (and will be at most $p\gamma\alpha$). This is because, the cost penalizes one-dimensional impulses and there are at most $p$ of them.} Therefore $\delta$--optimality of $(z^\delta, u^\delta, v^\delta)$ implies $\delta$--optimality of $(z^{*,\delta}, u^{*,\delta}, v^{*,\delta})$.	

\medskip

\noindent { \bf (Step 2) Constructing optimal controls as $T\to\infty$ in~\eqref{eq-hjb}:} \\

The proof follows along the same lines as that of \cite[Lemma 6]{biswas2009small}. The idea is to construct a function  $(u,v)$ on $\RR^+$ such that  $(u_t,v_t)_{\{t\in [0,T]\}}$ is the optimal control for every $T>0$. Consider the topology on $L^2_{\text{loc}}(\RR^+,\RR^p)$ given by the coarsest topology that makes the restriction $u\rightarrow (u_t)_{\{t \in [0,T]\}}\in L^2([0,T],\RR^p)$ (equipped with weak topology) continuous. Denote this space by  $L^2_{w,\text{loc}}(\RR^+,\RR^p)$.  Function spaces $L^2_{w}([0,T],\RR^p)$ and $L^2_w(\RR^+,\RR^p)$ are defined likewise. Then 
$$ (z,u,v)\mapsto \widetilde V(z) +\frac{1}{2}\int_0^T \|u(s)\|^2 \D s + \gamma I_L(v)$$ 
is a lower semi-continuous function on $\RR^p\times L^2_{w}([0,T],\RR^p)\times \frD^p_T$.  Then from~\eqref{eq-hjb}, it suffices to focus on $u$ such that 
$$\left\{u\in L^2([0,T],\RR^p): \int_0^T \|u(s)\|^2\D s\leq \sup_{T>0}2V_{\gamma,T}(x)\right\}.$$ 
By the Banach-Alaoglu theorem, the above set   is pre-compact in $L^2_w([0,T];\RR^p)$.  {From Step 1, we have already seen that it suffices to focus only on $v$ that have at most $p$ one-dimensional jumps and from the hypothesis that $\sup_{T>0}V_{\gamma,T}(x)<\infty$. We can further focus on $v$ that are  uniformly bounded.}  From this, we can conclude that nearly optimal controls $(u,v)$ can be restricted to a compact set (call it $\calU_T$) of $L^2_w([0,T],\RR^p)\times \frD_T^p$. From the compactness of level sets of $\widetilde V$ (as it is a rate function), we can restrict the nearly optimal $z$ to a compact set of $\RR^p$.  Therefore $$(z,u,v)\mapsto \widetilde V(z) +\frac{1}{2}\int_0^T \|u(s)\|^2 \D s +\gamma I_L(v)$$
attains a minimum value.  For $T'>T$, it is clear that for any $(u,v)\in \calU_{T'}$, we have $(u,v)_{\{t \in [0,T]\}}\in \calU_T$. Let $\calU^*$ be set of $(u,v)$ in $L^2_w(\RR^+,\RR^p)\times \frD^p$ such that $v$ is piecewise constant with at most $p$ one-dimensional  jumps, with
$$ \int_0^\infty \|u(s)\|^2\D s\leq \sup_{T>0}2V_{\gamma,T}(x) \text{ and } \sup_{t\geq 0}\|v(t)\| <\infty.$$ 
Clearly, $(u,v) \in \calU^*$ implies $(u,v)_{\{t \in [0,T]\}}\in \calU_T$. Since $\calU^*$ is an intersection of a decreasing sequence of non-empty compact sets $\{\calU_T:T>0\}$,  $\calU^*$ is a non-empty compact set. Hence there exists a $(u^*,v^*)\in \calU^*$ such that $(u^*,v^*)_{\{t \in [0,T]\}}$ is the optimizer for 
$$ \inf_{ (u,v) \in \calR_{x}(z,T)} \left( \widetilde V(z) +\frac{1}{2}\int_0^T \|u(s)\|^2 \D s +\gamma I_L(v)\right),$$
for every $T>0$. 
Consequently, for every $T>0$,
\begin{align} \label{e:vgtx}  V_{\gamma,T}(x)= \widetilde V(z) + \frac{1}{2}\int_0^T\|u^*(s)\|^2\D s + \gamma I_L(v^*)\,.\end{align} 

\medskip

\noindent {\bf (Step 3) Identifying the limiting optimal control problem:} \\

We know that $\gamma I_L(v^*)\in \{0,\gamma \alpha, 2\gamma \alpha,\ldots,p\gamma\alpha\}$ for every $T>0$. Then along a subsequence $T_k\to \infty$, 
$ \frac{1}{2}\int_0^{T_k}\|u^*(s)\|^2\D s +\gamma  I_L(v^*)$  converges
and therefore, $\widetilde V(z)$ also converges to say, $V^*$. Note that $z$ depends on $T$,  the dependence was suppressed in order to keep the notation simple. Now we make the dependence on $T$ explicit. Using compactness of the level sets of $\widetilde V$,  we can conclude that $z({T_k})$ converges to $z^*$ along a further sub-subsequence, which without any loss of generality, we denote by $T_k$ again. From the lower semicontinuity of $\widetilde V$, we get
\begin{align}\label{eq: hjb-inf-1} \widetilde V(z^*)\leq \liminf_{k\to \infty} \widetilde V(z(T_k))\doteq V^{*}\text{ and } \liminf_{k\to\infty} V_{\gamma,T_k} (x)= V^{*} + \frac{1}{2}\int_0^\infty\|u^*(s)\|^2\D s + \gamma I_L(v^*).\end{align} 
 
It only remains to show that $V^*=0$.  To do this, we again proceed along the lines of \cite[Lemma 6]{biswas2009small}. One difficulty straightway is that $\widetilde V$ a priori is known only to be lower continuous. Therefore we cannot infer that $\widetilde V(z^*)=V^{*}$.  To show this, we write~\eqref{eq-hjb} as
\begin{align*}
V_{\gamma,T} (x)= \inf_{y\in \RR^p} \inf_{ (u,v) \in{\calR}_{x}(y,T)} \left( \widetilde V(y) +\frac{1}{2}\int_0^{T} \|u(s)\|^2 \D s +\gamma I_L(v)\right).
\end{align*}
It is easy to see that for a fixed $y\in \RR^p$, the infimum 
$$H(x,y,T)\doteq   \inf_{ (u,v) \in{\calR}_x(y,T)} \left( \frac{1}{2}\int_0^T \|u(s)\|^2 \D s + \gamma I_L(v)\right)$$
can be handled exactly as before. Therefore, there exists a pair $(u^*,v^*)$  such that the solution $Y(\cdot)$ to~\eqref{ctdynR} satisfies $Y(T)\to y$ as $T\to\infty$. We also have 
$$ H(x,y)\doteq \lim_{T\to \infty} H(x,y,T)= \frac{1}{2}\int_0^\infty\|u^*(s)\|^2\D s + \gamma I_L(v^*).$$
\begin{center}and\end{center}
\begin{align}\label{eq:hjb-inf-2}
\liminf_{k\to\infty} V_{\gamma,T_k} (x)= \inf_{y\in \RR^p}\left(\widetilde V(y) + H(x,y)\right)\leq \widetilde V(z^*)+ H(x,z^*), \text{  }.
\end{align}
Here $z^*$ is as before. From~\eqref{eq: hjb-inf-1} and~\eqref{eq:hjb-inf-2}, we have
\begin{align*}
V^{*} + \frac{1}{2}\int_0^\infty\|u^*(s)\|^2\D s + \gamma I_L(v^*)&=\inf_{y\in \RR^p}\Big(\widetilde V(y) + H(z,y)\Big)\leq \widetilde V(z^*)+ H(z,z^*)\\
\implies V^{*}&\leq \widetilde V(z^*). 
\end{align*}
Here we have used the fact that $$ H(z,z^*)= \frac{1}{2}\int_0^\infty\|u^*(s)\|^2\D s + \gamma I_L(v^*).$$
Hence $V^{*}=\widetilde V(z^*)$.
From the foregoing, it is also clear that 
$$V_\gamma (x)= \widetilde V(z^*) + H(x,z^*) .$$
Now consider the corresponding trajectory $Y^*(t)$ which satisfies $Y^*(0)=x$ and $Y^*(t)\to z^*$ as $t\to \infty.$
It is clear that if $\widetilde V(z^*)>0$, then there exist $\delta,R$ such that $0<\delta<\|Y^*(t)\|<R$. Then from Lemma~\ref{lem:fleming-mod}, 
$$ V_\gamma (x)\geq \int_0^\infty \|u^*(s)\|^2\D s =\infty.$$
This is a contradiction, because $\widetilde V(x)<\infty$. Hence $\widetilde V(z^*)=0$ and we are done.
\end{proof}

{In the following result, we shall prove a locally uniformly upper bound result.
\begin{lemma} \label{lem-uc-local}Suppose $\{x_n:n\in \NN\}\subset \RR^p$ and $\{T_n:n\in \NN\}\subset \RR^+$ are sequences such that  $x^n\to x$, for some $x\in \RR^p$ and $T_n\uparrow \uparrow \infty$.  Then, we have 
$$ V_\gamma (x)\leq \liminf_{n\to\infty} V_{\gamma,T_n}(x_n)\,.$$
\end{lemma}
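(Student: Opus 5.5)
We will prove the inequality by extracting a limit of nearly optimal controls for $V_{\gamma,T_n}(x_n)$ and showing that this limit is admissible for $V_\gamma(x)$, in the spirit of Steps 1--3 in the proof of Proposition~\ref{prop-Vexp}.

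\textbf{Setup.} Passing to a subsequence, we may assume that $\liminf_{n} V_{\gamma,T_n}(x_n)$ is a limit. By Proposition~\ref{prop-Vexp}(i) this limit is at most $p\gamma\alpha<\infty$. For each $n$, choose a $1/n$-optimal triplet $(z_n,u_n,v_n)$ for~\eqref{eq-hjb} with $x=x_n$ and $T=T_n$.

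\textbf{Compactness.} By Step~1 we may take $v_n$ to have at most $p$ one-dimensional jumps. Since $\widetilde V(z_n)$ and $\frac12\int_0^{T_n}\|u_n\|^2$ are bounded by $p\gamma\alpha+1$, the following hold.
\begin{enumerate}
\item[(i)] The $z_n$ lie in a compact level set of $\widetilde V$.
\item[(ii)] The $u_n$, extended by zero, are bounded in $L^2(\RR^+,\RR^p)$.
\item[(iii)] The number of jumps is bounded, so along a subsequence $I_L(v_n)$ is constant and the jump times $t^n_j$ converge in $[0,\infty]$.
\end{enumerate}
For the jump sizes, we use the reversed trajectory $Y_n$ of~\eqref{ctdynR}. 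On each interval between jumps, Gronwall with the Lipschitz bound on $b$ and the $L^2$ bound on $u_n$ keeps $Y_n$ bounded on compact time sets, uniformly in $n$, starting from $x_n\to x$. The post-jump states (the $\theta_j$) must also be bounded. I would argue this by noting that post-jump states with $\|\theta\|\to\infty$ would force $\widetilde V(z_n)\to\infty$, or could be replaced without increasing cost. If that fails, I would use the freedom in Step~1 to place the single impulse at $t^*$ and re-route.

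We then extract subsequences with $u_n\to u^*$ weakly in $L^2_{w,\text{loc}}$ (using Banach--Alaoglu and a diagonal argument, as in Step~2), jump times and sizes converging, and $z_n\to z^*$. Write $Y^*$ for the trajectory driven by $(x,u^*,v^*)$. Continuous dependence of~\eqref{ctdynR} on the initial point and on weakly converging $L^2$ controls, together with convergence of finitely many jump data, gives $Y_n\to Y^*$ uniformly on compact sets of each inter-jump interval. Jump times escaping to $\infty$ are simply dropped; this only lowers $I_L$. Lower semicontinuity then yields
\[
\tfrac12\int_0^\infty\|u^*\|^2+\gamma I_L(v^*)\le\liminf_n\Big(\tfrac12\int_0^{T_n}\|u_n\|^2+\gamma I_L(v_n)\Big),
\]
and also $\liminf_n\widetilde V(z_n)\ge \widetilde V(z^*)\ge0$.

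\textbf{Admissibility (main obstacle).} It remains to show that $(u^*,v^*)\in\calR_x$, that is, $Y^*(t)\to0$. This is the main obstacle, because the endpoints $Y_n(T_n)=z_n$ do not directly control the limit trajectory at infinity. After the last impulse, $Y^*$ solves $\dot Y=-b(Y)+u^*$ with $u^*\in L^2$. I would follow Step~3: use the dichotomy from Lemma~\ref{lem:fleming-mod}. If $Y^*$ stays in an annulus $\delta<\|Y^*\|<R$ for all large times, the cost would be infinite, which is a contradiction. Otherwise, once the $L^2$ tail of $u^*$ is small, $Y^*$ is forced into a neighbourhood of $0$, using the stability of $0$ under the reversed flow in the sense used there. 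A possible escape $\|Y^*\|\to\infty$ should be excluded by the uniform cost bound together with a Lyapunov-type argument.

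If convergence to a general $z^*$ rather than $0$ is what falls out, I would instead compare with $\inf_y(\widetilde V(y)+H(x,y))$ as in Step~3. That route gives $\liminf_n V_{\gamma,T_n}(x_n)\ge \widetilde V(z^*)+H(x,z^*)\ge V_\gamma(x)$, where the last inequality is the identification of $V_\gamma$ established at the end of the proof of Proposition~\ref{prop-Vexp}.
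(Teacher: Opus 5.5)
There is a genuine gap, and it is exactly at the step you call the main obstacle. The limit $(u^*,v^*)$ of an arbitrary sequence of near-optimal controls need not lie in $\calR_x$, and the mechanism you invoke to force $Y^*(t)\to0$ runs in the wrong direction.

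For the reversed dynamics $\dot Y=-b(Y)+u$ the origin is a \emph{source} (see the appendix). A small $L^2$ tail of $u^*$ therefore pushes $Y^*$ away from $0$, not into a neighbourhood of it. Escaping to infinity costs nothing (take $u=0$), so no cost bound or Lyapunov argument can rule it out. Worse, an impulse is priced by number and not by size, so jumps can be postponed for free, and your rule that jump times escaping to $\infty$ are dropped then destroys admissibility.

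Concretely, let $p=1$, $b(y)=-y$, $\widetilde V(z)=z^2$, and $x_n\equiv x$ with $x^2>\gamma\alpha$.
\begin{itemize}
\item Any jump-free control costs at least $x^2$, since $\frac{\D}{\D t}Y^2\ge-\frac12u^2$. Hence $V_{\gamma,T}(x)=V_\gamma(x)=\gamma\alpha$.
\item Take $u_n\equiv0$ with a single impulse at time $T_n/2$ sending $Y_n$ from $xe^{T_n/2}$ to $0$. This is exactly optimal for every $n$, and it is already in the normal form of Step~1.
\item Your extraction yields $u^*=0$, $v^*=0$ and $Y^*(t)=xe^{t}\to\infty$. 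Its cost $0$ is strictly below $V_\gamma(x)$, so the lower semicontinuity inequality gives nothing.
\item The fallback via $\widetilde V(z^*)+H(x,z^*)$ is unavailable, because $Y^*$ converges to no $z^*$.
\end{itemize}
The same example shows that the jump sizes $-xe^{T_n/2}$ of near-optimal $v_n$ need not stay bounded, so the hedged compactness of jump data also fails as stated.

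The paper sidesteps all of this by never taking limits of controls with a moving initial point. It takes the optimal pair $(u^n,v^n)$ for $V_{\gamma,T_n}(x_n)$. On $[0,\epsilon_n]$, with $\epsilon_n=\|x_n-x\|$, it replaces this pair by the straight segment from $x$ to $Y^n(\epsilon_n)$, and keeps it unchanged afterwards. The spliced pair is admissible for $V_{\gamma,T_n}(x)$ and costs only $o(1)$ more, so $V_{\gamma,T_n}(x)\le V_{\gamma,T_n}(x_n)+o(1)$. The conclusion then follows from the pointwise convergence $V_{\gamma,T_n}(x)\to V_\gamma(x)$ of Proposition~\ref{prop-Vexp}(ii), where convergence to $0$ at infinity is handled at a fixed initial point.

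To salvage your route you would have to normalize the near-optimal controls, for instance by forcing jump times to stay bounded. You would then have to re-prove that the normalized limits converge to $0$, which amounts to redoing Proposition~\ref{prop-Vexp}(ii) uniformly in the initial point. The local splicing in the initial point, combined with that proposition, is the missing idea.
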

\begin{proof}
Fix sequences $\{x_n:n\in \NN\}$, $\{T_n:n\in \NN\}$ and $x$ as in the hypothesis of the lemma. For any $n$, from Step 2 of the proof of Proposition~\ref{prop-Vexp}(ii), we know that there exists a pair $(u^n,v^n)$ such that 
\begin{align*}   V_{\gamma,T_n}(x^n)= \widetilde V(z^n) + \frac{1}{2}\int_0^{T_n}\|u^n(s)\|^2\D s + \gamma I_L(v^n)\,.\end{align*} 
Recall that the  trajectory $Y^n$ associated with $(u^n,v^n)$ on $\RR^+$ is  defined according to~\eqref{ctdynR}--\eqref{ctdynR1} with $Y^n(0)=x^n$ and $Y^n(T_n)=z^n$. 

For $ \epsilon_n \doteq  \|x^n-x\|$,  it is clear that 
$$ \lim_{n\to\infty} \int_0^{\epsilon_n} \|u^n(s)\|^2\D s=0\,.$$

We now define another trajectory $\widetilde Y^n$ (which will be a sub-optimal trajectory with respect to $V_{\gamma,T_n}(x)$) as follows: $\widetilde Y^n$ is again defined according to~\eqref{ctdynR}--\eqref{ctdynR1} with $\widetilde Y^n(0)=x$ and $\widetilde Y(T_n)=z^n$ with control $(\widetilde u^n,\widetilde v^n)$ given by   $\widetilde v^n\equiv v^n$ and
\begin{align*}
\widetilde u^n(t)\doteq \begin{cases}\frac{Y^n(\epsilon_n)-x}{\epsilon_n} + b(\widetilde Y^n(t)),  & \text{ for $t <\epsilon_n$}\\
 u^n(t), & \text{ for $t\geq \epsilon_n$}\,.
\end{cases}
\end{align*}
From the construction, we can easily see that for $0\leq t<\epsilon_n$, $\widetilde Y^n(t)= x+ \frac{\big(Y^n(\epsilon_n)-x\big)}{\epsilon_n} t$ and  $\widetilde Y^n (t)=Y^n(t)$, for $t\geq \epsilon_n$.  Consequently, we can conclude that for $0\leq t<\epsilon_n $, 
$$\widetilde u^n(t)= \frac{Y^n(\epsilon_n)-x}{\epsilon_n}  + b\Big(x+ \frac{\big(Y^n(\epsilon_n)-x\big)}{\epsilon_n} t\Big)\,.  $$ Moreover, using a standard Gronwall argument and the fact that $x^n\to x$, we can also conclude that $Y^n(\epsilon_n)\to x$. Now, for $T=T_ n$ consider
\begin{align*}
V_{\gamma,T_n}(x)- V_{\gamma,T_n}(x_n) &\leq \int_0^{\epsilon_n} \| \widetilde u^n(s)\|^2\D s - \int_0^{\epsilon_n}\|u^n(s)\|^2 \D s\\
&\leq 3\frac{\|Y^n(\epsilon_n)-x\|^2}{\epsilon_n^2}\epsilon_n +3C \|x\|^2\epsilon_n + \frac{\|Y^n(\epsilon_n)-x\|^2}{\epsilon_n^2}C\epsilon^3_n - \int_0^{\epsilon_n}\|u^n(s)\|^2 \D s
\end{align*}
In the above, to get the second line, we use the fact that $\|b(x)\|\leq C\|x\|$, for some $C>0$ (this follows from the fact that $b$ is Lipschitz and $b(0)=0$).  Taking $n\to\infty$ and using the definition of $\epsilon_n$, the fact that $x_n\to x$ and  Proposition~\ref{prop-Vexp}(ii), we get 
\begin{align*}
V_{\gamma}(x)=\lim_{n\to\infty} V_{\gamma,T_n}(x) \leq \liminf_{n\to\infty} V_{\gamma,T_n}(x_n)\,.
\end{align*}
This completes the proof.
\end{proof}}

\begin{proof}[Proof of Theorem~\ref{thm-main}]
From Proposition~\ref{prop-finite} and the definition of WLDP, we know that for every compact set $K\subset \RR^p$
\begin{align}\label{eq-ub-T}
 \limsup_{n\to\infty} \frac{1}{\log(n)} \log \PP\big(X^{n,\gamma}(T) \in K\big)\leq -\inf_{x\in K} V_{\gamma,T}(x)
\end{align}
and for every open set $U\subset \RR^p$,
\begin{align}\label{eq-lb-T}
 \liminf_{n\to\infty} \frac{1}{\log(n)} \log  \PP\big(X^{n,\gamma}(T) \in U\big)\geq -\inf_{x\in U} V_{\gamma,T}(x)\,.
\end{align}
Here, the function $V_{\gamma,T}$ is as given by~\eqref{eq-hjb}. We first consider~\eqref{eq-ub-T}. Taking limit superior as $T\to\infty$ in~\eqref{eq-ub-T}, we obtain
\begin{align}\label{eq-ub-Ta}
\limsup_{T\to\infty} \limsup_{n\to\infty} \frac{1}{\log(n)} \log \PP\big(X^{n,\gamma}(T) \in K\big)\leq -\liminf_{T\to\infty}\inf_{x\in K} V_{\gamma,T}(x)\,.
\end{align}
Fix $\epsilon >0$. Let $x_T\in K$ be  $\epsilon$- optimal for $\inf_{x\in K} V_{\gamma,T}(x)$. Since $K$ is compact, for any sequence $T_n \rightarrow \infty$ , $x_{T_n}$ will have a  convergent subsequence $x_{T_{n_k}}$ that converges to some $x^*\in K$  as $k \rightarrow \infty$. From  Lemma~\ref{lem-uc-local}, we can infer that 

$$ \inf_{x\in K}V_{\gamma}(x)\leq V_{\gamma}(x^*)\leq \liminf_{k \to\infty} V_{\gamma,T_{n_k}}(x_{T_{n_k}})\leq \liminf_{k\to\infty} \inf_{x\in K} V_{\gamma,T_{n_k}}(x) +\epsilon.$$
  As the sequence was arbitrary, we have
  
$$ \inf_{x\in K}V_{\gamma}(x)\leq \liminf_{T\to\infty} \inf_{x\in K} V_{\gamma,T}(x) +\epsilon.$$  

  Taking $\epsilon \to 0$, along with~\eqref{eq-ub-Ta} we obtain \eqref{eq-ub}.\\

Now consider~\eqref{eq-lb-T} and take limit inferior as $T\to\infty$. This gives us
\begin{align}\label{eq-lb-Taxs
  }
\liminf_{T\to\infty} \liminf_{n\to\infty} \frac{1}{\log(n)} \log  \PP\big(X^{n,\gamma}(T) \in U\big)\geq -\limsup_{T\to\infty}\inf_{x\in U} V_{\gamma,T}(x)\,.
\end{align}
To prove \eqref{eq-lb} it is enough to show that 
\begin{equation}  \limsup_{T\to\infty}\inf_{x\in U} V_{\gamma,T}(x) \leq  \inf_{x\in U} V_{\gamma}(x) \,. \label{eq:cond} \end{equation}
To do this, we suppose the contrary \emph{i.e.,} there exists $\epsilon>0$ such that 
\begin{equation}  \inf_{x\in U} V_{\gamma}(x) +\epsilon\leq  \limsup_{T\to\infty}\inf_{x\in U} V_{\gamma,T}(x)\,. \label{eq:contra}
  \end{equation}
Let $x^*\in U$ be $\frac{\epsilon}{2}$-optimal for $\inf_{x\in U} V_{\gamma}(x) $. Then, we have
\begin{align*}
  V_{\gamma}(x^*)  +\frac{\epsilon}{2} &\leq \inf_{x\in U} V_{\gamma}(x) +\epsilon
\end{align*}
Using \eqref{eq:contra} and Proposition~\ref{prop-Vexp}(ii) we obtain
\begin{align*}
  V_{\gamma}(x^*)  +\frac{\epsilon}{2} &\leq  \limsup_{T\to\infty}\inf_{x\in U} V_{\gamma,T}(x)\\
&\leq  \limsup_{T\to\infty} V_{\gamma, T}(x^*)\\
&\leq  V_{\gamma}(x^*)\,.
\end{align*}
This is a contradiction and \eqref{eq:contra} does not hold. So \eqref{eq:cond} holds and the proof is complete.
\end{proof}

\appendix

\section{An optimal control result.}

Below we give a slightly modified version of \cite[Lemma 3.1]{fleming1977exit}. Under Assumption~\ref{main-assump}(i), it is clear that the equation 
\begin{align}\label{eq-rev}\dot{\phi}(t,x)= -b(\phi(t,x))\end{align} with $\phi(0,x)=x$
has a unique solution for all $t>0$ and $x\in \RR^p$.  Let $L>0$ be the  Lipschitz constant of $b(\cdot)$. From Assumption~\ref{main-assump}(ii), the fixed point $x=0$ is unstable in all directions (i.e. source) for the dynamics given by~\eqref{eq-rev}.

Suppose that the control $(u,v)$ is such that $v$ has $k$ jumps at $t_i, 1\leq i \leq k$, with $k \leq p$.  Then   $I_L(v)\leq p \gamma  \alpha$. Define a  trajectory $x(t)$ in $\RR^p$ given as follows: for $1\leq i\leq k$,
\begin{align} \dot{ x}(t)&= -b( x(t)) + u(t), \text{ for $t_{i-1}\leq t\leq t_i$} \label{app-1} \\\nonumber
	 x(t_i+)&= v(t_i)-v(t_i+)\,.  
	\end{align}
	with $x(t_k+)$ such that $r_1<\|x(t_k+)\|<r_2$, for some $0<r_1<r_2$.

\begin{lemma}\label{lem:fleming-mod}
	Suppose $x(\cdot)$ is as defined above in \eqref{app-1}. Then 
	$$ \int_0^\infty \|u(s)\|^2 \D s =+\infty\,.$$
\end{lemma}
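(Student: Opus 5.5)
The plan is to adapt the argument of \cite[Lemma 3.1]{fleming1977exit}. I read the hypothesis the way it is used in Step 3 of the proof of Proposition~\ref{prop-Vexp}: after the last impulse at $t_k$ there are no further jumps, and the trajectory stays in the closed annulus
$$A\doteq\{y\in\RR^p: r_1\le\|y\|\le r_2\}$$
for all $t\ge t_k$. On $[t_k,\infty)$ the path is then a purely continuously controlled solution of \eqref{eq-rev} perturbed by $u$. The idea is to show that each time window of a fixed length forces a fixed positive amount of $\int\|u\|^2$, and then to sum over infinitely many windows.

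\textbf{Step 1 (uniform exit time for the uncontrolled reversed flow).} I first show that there is $T_0<\infty$ and $\eta>0$ with the following property: for every $y\in A$, the solution $\phi(\cdot,y)$ of \eqref{eq-rev} satisfies $\operatorname{dist}(\phi(t,y),A)\ge \eta$ for some $t\in[0,T_0]$.

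Pointwise exit comes from Assumption~\ref{main-assump}(ii). Suppose instead that a reversed trajectory stayed in a slightly larger compact annulus $A'$ forever. This is a backward trajectory of the forward flow. Its limit set would then be a nonempty compact invariant set contained in $A'$, so it avoids $0$. But forward orbits of this invariant set converge to $0$, and the set is closed, so it must contain $0$. This is a contradiction.

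Uniformity in $y\in A$ then follows from compactness of $A$ and continuous dependence of $\phi(t,\cdot)$ on the initial condition, using the Lipschitz constant $L$. This is the step I expect to need the most care: one has to choose $A'\supset A$ and $\eta$ correctly so that exiting $A'$ gives distance at least $\eta$ from $A$.

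\textbf{Step 2 (Gronwall comparison on a window).} Fix $s\ge t_k$ and compare $x(t)$ with $\phi(t-s,x(s))$ on $[s,s+T_0]$. Gronwall's inequality and Cauchy--Schwarz give
$$\sup_{t\in[s,s+T_0]}\|x(t)-\phi(t-s,x(s))\|\le e^{LT_0}\int_s^{s+T_0}\|u(r)\|\,\D r\le e^{LT_0}\sqrt{T_0}\Big(\int_s^{s+T_0}\|u(r)\|^2\,\D r\Big)^{1/2}.$$
By Step 1, the uncontrolled path $\phi(\cdot-s,x(s))$ reaches distance at least $\eta$ from $A$ within the window. Since $x(t)\in A$ throughout, the left-hand side is at least $\eta$. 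Hence
$$\int_s^{s+T_0}\|u\|^2\,\D r\ \ge\ c\doteq \eta^2e^{-2LT_0}/T_0>0.$$

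\textbf{Step 3 (summation).} Apply Step 2 on the disjoint windows $[t_k+jT_0,\,t_k+(j+1)T_0]$ for $j=0,1,2,\dots$. Summing gives $\int_0^\infty\|u\|^2\,\D s\ge\sum_j c=+\infty$, which is the claim. The number of jumps ($k\le p$) plays no role beyond guaranteeing that there is a last jump time $t_k$ after which the dynamics are continuous.
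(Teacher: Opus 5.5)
Your proof is correct and follows the same Fleming-type scheme as the paper: a uniform exit time for the reversed flow \eqref{eq-rev}, a Gronwall/Cauchy--Schwarz comparison with the uncontrolled flow on windows of fixed length, and summation over infinitely many windows. You differ in how you obtain the uniform exit time.

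\textbf{The paper's route.} It uses a (smoothed) converse Lyapunov function and the fact that $V(\phi(t,z))$ increases at a rate bounded away from zero. This gives a $T^*$ uniform over the unbounded region $\{\|z\|>r_1\}$ and shows that reversed trajectories escape to infinity.

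\textbf{Your route.} You argue via the $\alpha$-limit set: a nonempty compact invariant set inside $A'$ would have to contain $0$ by global attractivity. You then pass from pointwise to uniform exit by continuous dependence and compactness of $A$. This is more elementary and needs only attractivity of $0$, not full asymptotic stability. It gives uniformity only on the compact annulus, but that is all that is used, since the path is confined to $A$.

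Your enlarged annulus $A'$ also gives an explicit gap $\eta$ between $x(t)$ and the uncontrolled path. In the paper, $\|\phi\|\ge r_2$ together with $\|x\|<r_2$ does not by itself give the stated bound $\|\phi-x\|>r_2$, so an adjustment of this kind is implicitly needed there.

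Your reading of the hypothesis is the one the argument requires: confinement to the annulus after the last jump, not merely $r_1<\|x(t_k+)\|<r_2$. It is also what Step 3 of Proposition~\ref{prop-Vexp} supplies. There, $Y^*(t)\to z^*\neq 0$ only gives confinement for large $t$, so start your windows at such a time rather than at $t_k$.
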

\begin{proof} As mentioned already, from Assumption~\ref{main-assump}, it follows that the origin is unstable for \eqref{eq-rev} in all directions. Thus if  $\| z\|>r_1 $, there exists $T^*>0$  such that $\|\phi(t,z)\|\geq r_2$, for every $t>T^*$. We now argue that the $T^*$ above depends only on $r_1$ and not on $\|z\|$. For this, note that by the converse Lyapunov theorem (\cite{Sagar}, pp.\ 239-240), there exists a smooth\footnote{The aforementioned result does not claim smoothness of $V$, but that can be arranged by a suitable modification of the function as in \cite{Wilson}.} Lyapunov function $V : \R^p \to \R^+$ such that
  \begin{align}
    \label{eq:dagger} (i) & \langle\nabla V(x), b(x)\rangle < 0 \mbox{ for } x \neq 0,\mbox{ and,}\\
    &\nonumber \\
(ii)&\mbox{ there exist continuous increasing functions $\kappa_1, \kappa_2 : \RR^p \to \RR^+$ such that $\kappa_i(0) = 0,$}  \\& \mbox{$i = 1,2,$ with $\kappa_i(y) \uparrow\uparrow\infty, i = 1,2,$ as $y\uparrow\uparrow\infty$, such that }   k_1(\parallel x \parallel) \le V(\parallel x \parallel) \le k_2(\parallel x \parallel). \label{eq:daggerdagger}
\end{align}
It follows from \eqref{eq:dagger} that for $\|z\| > r_1$, any trajectory $\phi(\cdot, z)$ has
$$\frac{\D}{\D t}V(\phi(t,z)) = \langle-b(\phi(t,z)),\nabla V(\phi(t,z))\rangle$$
bounded away from $0$ uniformly in $\|z\| > r_1$ and hence $V(\phi(t,z))  \uparrow\uparrow \infty$. Therefore, by \eqref{eq:daggerdagger},  $\|\phi(t,z)\| \uparrow\uparrow \infty$ uniformly in $z$ whenever $\|z\|>r_1$. Hence $T^*$ depends only on $r_1$. Without loss in generality, we suppose that $T^*>t_k$. For  $t_k\leq t\leq  T^*$,
consider
$$\phi(t,x(t_k+))- x(t)= \int_{t_k}^t\big( \dot{x}(s) + b(x(s))\big)\D s + \int_{t_k}^t \big(-b(x(s) ) + b(\phi(s,x(t_k+)))\big)\D s. $$
By an application of triangle inequality followed by Jensen's inequality, we have
$$  \sup_{t_k\leq s\leq t}\|\phi(s,x(t_k+))- x(s)\| \leq (t-t_k)^{\frac{1}{2}} \sqrt{\int_{t_k}^t \|\dot{x}(s)+ b(x(s))\|^2\D s } + \int_{t_k}^t \|-b(x(s) ) + b(\phi(s,x(t_k+)))\|\D s.$$
Using the Lipschitz assumption on $b$ and by defintion of $u$,  we have 
  \begin{align*}
 \sup_{t_k\leq s\leq t}\|\phi(s,x(t_k+))- x(s)\| &\leq t^{\frac{1}{2}} \sqrt{\int_{t_k}^t \|u(s)\|^2\D s} + L\int_0^t \sup_{t_k\leq r\leq s}\|\phi(r,x(t_k+))- x(r) \|\D s\,.
\end{align*}
From Gr\"onwall's inequality,
\begin{align*}
r_2<\sup_{t_k\leq s\leq T^* }\|\phi(s,x(t_k+))- x(s)\|\leq \sqrt{T^*}e^{LT^*} \sqrt{\int_{t_k}^{T^*} \|u(s)\|^2\D s\,.}
\end{align*}
Since $T^*$ only depends on $r_1$, we may repeat above argument by replacing $x(t_k+)$ with  $x(T^*)$ to obtain
$$r_2< \sqrt{T^*}e^{LT^*} \sqrt {\int_{T^*}^{2T^*} \|u(s)\|^2\D s.\,}$$
 We may repeat this  inductively to obtain the result.
\end{proof}

\section*{Acknowledgements} SRA thanks the hospitality of Rice University during his tenure as a postdoctoral fellow  where  a significant part of this work was carried out. SA  acknowledges support of the Department of Atomic Energy, Government of India, under project no. RTI4001.   VSB thanks J.\ C.\ Bose and S.\ S.\ Bhatnagar Fellowships and the National Science Chair, all from the Government of India, for support during the course of this work. SA and VSB thank the generous hospitality of the International Centre for Theoretical Sciences, Bengaluru, where bulk of their work was done. The authors thank the Indian Institute of Technology Bombay for its hospitality, where part of this work was carried out.

		\bibliographystyle{plain}	
	\bibliography{references}
	\end{document}